\documentclass[a4paper,12pt]{article}

\usepackage{amsfonts}
\usepackage{amscd,color}
\usepackage{amsmath,amsfonts,amssymb,amscd}
\usepackage{indentfirst,graphicx,epsfig}
\usepackage{graphicx}
\input{epsf}
\usepackage{graphicx}
\usepackage{epstopdf}
\usepackage{caption}

\setlength{\textwidth}{152mm} \setlength{\textheight}{230mm}
\setlength{\headheight}{2cm} \setlength{\topmargin}{0pt}
\setlength{\headsep}{0pt} \setlength{\oddsidemargin}{0pt}
\setlength{\evensidemargin}{0pt}

\parskip=3pt

\voffset -25mm \rm

\newtheorem{thm}{Theorem}[section]
\newtheorem{df}[thm]{Definition}

\newtheorem{conjecture}[thm]{Conjecture}
\newtheorem{lem}[thm]{Lemma}

\newtheorem{cor}[thm]{Corollary}
\newtheorem{pro}[thm]{Proposition}
\newenvironment {proof} {\noindent{\em Proof.}}{\hspace*{\fill}$\Box$\par\vspace{4mm}}
\newcommand{\ml}{l\kern-0.55mm\char39\kern-0.3mm}

\baselineskip=20pt

\title{\textbf{ Conflict-free connection of
trees\footnote{Supported by NSFC No.11531011.}}}
\author{{\small Hong Chang, Meng Ji, Xueliang Li, Jingshu Zhang } \\
{\small  Center for Combinatorics and LPMC}\\
{\small Nankai University, Tianjin 300071, P.R. China}\\
{\small Email: changh@mail.nankai.edu.cn, jimengecho@163.com,}\\
{\small lxl@nankai.edu.cn, jszhang@mail.nankai.edu.cn}\\
}
\date{}
\begin{document}
\maketitle
\begin{abstract}
We study the conflict-free connection coloring of trees, which is also the conflict-free coloring of the so-called edge-path hypergraphs of trees. We first prove that for a tree $T$
of order $n$, $cfc(T)\geq cfc(P_n)=\lceil \log_{2} n\rceil$, which 
completely confirms the conjecture of Li and Wu. We then
present a sharp upper bound for the conflict-free connection number
of trees by a simple algorithm. Furthermore, we show that the
conflict-free connection number of the binomial tree with $2^{k-1}$
vertices is $k-1$. At last, we study trees which are $cfc$-critical,
and prove that if a tree $T$ is $cfc$-critical, then the conflict-free
connection coloring of $T$ is equivalent to the edge ranking of $T$.\\[2mm]
\textbf{Keywords:} tree; conflict-free connection coloring;
conflict-free coloring; $cfc$-critical; edge ranking \\
\textbf{AMS subject classification 2010:} 05C15, 05C40, 05C75, 05C85.\\
\end{abstract}

\section{Introduction}

All graphs in this paper are undirected, simple and nontirivial. We
follow \cite{BM} for graph theoretical notation and terminology not
described here. Let $G$ be a graph. We use $V(G), E(G), n(G), m(G)$,
and $\Delta(G)$ to denote the vertex set, edge set, number of
vertices (order of $G$), number of edges (size of $G$), and maximum
degree of $G$, respectively. Let $N(v)$ denote the neighborhood of $v$
in $G$. Given two graphs $G_1$ and $G_2$, the \emph{union} of $G_1$ and
$G_2$, denoted by $G_1\cup G_2$, is the graph with vertex set
$V(G_1)\cup V(G_2)$ and edge set $E(G_1)\cup E(G_2)$. A hypergraph $H$ is
a pair $(\mathcal{V},\mathcal{E})$, where $\mathcal{V}$ is the vertex set
of $H$ and $\mathcal{E}$ is the hyperedge set which is a family of nonempty
subsets of $\mathcal{V}$. The concept of hypergraphs is a generalization of
graphs, because a graph is a hypergraph in which each hyperedge is a pair of
vertices.

A vertex-coloring of a hypergraph $H=(\mathcal{V},\mathcal{E})$
is called \emph{conflict-free} if each hyperedge $e$ of $H$ has a
vertex of unique color that does not get repeated in $e$. The smallest
number of colors required for such a coloring is called the
\emph{conflict-free chromatic number} of $H$, and is denoted by
\emph{$\chi_{cf}(H)$}. This parameter was first introduced by Even,
Lotker, Ron and Smorodinsky \cite{ELR}, with an emphasis
on hypergraphs induced by geometric shapes. The main application of a
conflict-free coloring is that it models a frequency assignment for
cellular networks. A cellular network consists of two kinds of nodes:
base stations and mobile agents. Base stations have fixed positions
and provide the backbone of the network, they are represented by
vertices in $\mathcal{V}$. Mobile agents are the clients of the
network and are served by base stations. This is done as follows:
every base station has a fixed frequency, this is represented by
the coloring $c$. If an agent wants to establish a link with a base
station, it has to tune itself to this base station's frequency.
Since agents are mobile, they can be in the range of many different
base stations. To avoid interference, the system must assign
frequencies to base stations in the following way: for any range,
there must be a base station in the range with a frequency that is
not used by some other base station in the range. One can solve the
problem by assigning $n$ different frequencies to the $n$ base stations.
However, using many frequencies is expensive, and therefore a scheme that
reuses frequencies, where possible, is preferable. Conflict-free coloring
problems have been the subject of many recent papers due to their
practical and theoretical interest. One can find many results on
conflict-free coloring, see \cite{BCS,CFK,EM,HS,PT1,S}.

Some hypergraphs, which are induced from a given simple graph $G=(V,E)$ and its neighborhoods or its paths, have been studied with respect to the conflict-free coloring are listed below:

(i) \ The vertex-neighborhood hypergraph $H_N(G)=(\mathcal{V},\mathcal{E})$,
which has been studied in \cite{C,PT}, is a hypergraph with
$\mathcal{V}(H_N)=V(G)$ and $\mathcal{E}(H_N)=\{N_G(x)| x \in V(G)\}$.

(ii) \ The vertex-path hypergraph $H_{VP}(G)=(\mathcal{V},\mathcal{E})$ is a
hypergraph with $\mathcal{V}(H_{VP})=V(G)$ and $\mathcal{E}(H_{VP})=\{V(P)|$
$P$ is a path of $G$ $\}$. A conflict-free coloring of $H_{VP}(G)$ is called
a \emph{conflict-free coloring of $G$ with respect to paths}; we also define
the corresponding graph chromatic number, $\chi_{cf}^{P}(G)=\chi_{cf}(H_{VP}(G))$.
In \cite{CT}, the authors proved that it is coNP-complete to decide whether a
given vertex-coloring of a graph is conflict-free with respect to paths.
And in \cite{CKP}, the authors studied the conflict-free coloring of
tree graphs with respect to paths, and they showed that
$\chi_{cf}^{P}(P_n)=\lceil \log_2 (n+1)\rceil $ for a path $P_n$ on $n$
vertices, and $\chi_{cf}^{P}(B^*_{2(r+1)+3r}) \leq 4r+2$ for the complete binary
tree with $5r+2$ levels.

(iii) \ The edge-path hypergraph $H_{EP}(G)=(\mathcal{V},\mathcal{E})$ is a
hypergraph with $\mathcal{V}(H_{EP})=E(G)$ and $\mathcal{E}(H_{EP})=\{E(P)|$
$P$ is a path of $G$ $\}$, which is studied in this paper.

Inspired by rainbow connection colorings \cite{LSS, LS, LS1} and proper
connection colorings \cite{LM, LMQ} of graphs and conflict-free colorings
of hypergraphs, Czap et al. \cite{CJV} introduced the concept of
conflict-free connection colorings of graphs. An edge-colored graph
$G$ is called \emph{conflict-free connected} if each pair of distinct
vertices is connected by a path which contains at least one color used
on exactly one of its edges. This path is called a
\emph{conflict-free path}, and this coloring is called a
\emph{conflict-free connection coloring} of $G$. The
\emph{conflict-free connection number} of a connected graph $G$,
denoted by $cfc(G)$, is the smallest number of colors required to
color the edges of $G$ so that $G$ is conflict-free connected.
A conflict-free connection coloring of a connected graph $G$
using $cfc(G)$ colors is called an \emph{optimal conflict-free connection
coloring} of $G$. It is easy to see that for a tree $T$, a conflict-free
connection coloring of $T$ is a proper edge-coloring, and
$cfc(T)\geq \chi'(T)=\Delta(T)$. Note that a conflict-free coloring
of the edge-path hypergraph $H$ of a graph $G$ is a conflict-free
connection coloring of $G$. The other way round is not true in general,
since some pairs of vertices in a general graph $G$ may have more than
one path between them. However, for any tree $T$, the conflict-free
coloring of the edge-path hypergraph of $T$ is equivalent to the
conflict-free connection coloring of $T$, i.e.,
$\chi_{cf}(H_{EP}(T)) = cfc(T)$, since every pair of vertices in $T$ has
a unique path between them. In this paper, we study the conflict-free
connection coloring of trees, which is also the conflict-free coloring of
edge-path hypergraphs of trees. At first, we present some results on the
conflict-free connection for general graphs.

\begin{lem}{\upshape \cite{CHLMZ}}\label{lem1-1}
Let $G$ be a connected graph of order $n$.
Then $1\leq cfc(G)\leq n-1$.
Moreover, $cfc(G)=1$ if and only if $G=K_n$,
and $cfc(G)=n-1$ if and only if $G=K_{1,n-1}$.
\end{lem}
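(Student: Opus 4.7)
The plan is to address the three assertions of the lemma in turn. For the bounds, $cfc(G) \ge 1$ is trivial, while for $cfc(G) \le n-1$ one fixes a spanning tree $T$ of $G$, assigns pairwise distinct colors to the $n-1$ edges of $T$, and colors the remaining edges arbitrarily. Every pair of vertices is then joined in $T$ by a rainbow path, which is automatically conflict-free. For the characterization $cfc(G) = 1 \iff G = K_n$, the forward direction is immediate (a single edge is a conflict-free path), and the converse follows because any monochromatic path of length at least $2$ has no uniquely colored edge, so only single-edge paths can be conflict-free, forcing every pair of vertices to be adjacent.

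For the star characterization $cfc(G) = n-1 \iff G = K_{1,n-1}$, the easy direction notes that in $K_{1,n-1}$ any two leaves are joined by a unique path of length $2$ through the center, so its two edges must receive different colors; since all edges share the center, all $n-1$ edges must receive pairwise distinct colors. The substantive direction is the converse: assuming $G \ne K_{1,n-1}$, I would exhibit a coloring using $n-2$ colors, splitting into two cases. If $G$ is a tree $T$ other than a star, then $T$ has diameter at least $3$, so I can select two pendant edges $e_1 = u_1 v_1$ and $e_2 = u_2 v_2$ whose non-leaf neighbors $u_1, u_2$ are distinct; then I would assign $e_1$ and $e_2$ a common color and give the remaining $n-3$ edges pairwise distinct fresh colors, for a total of $n-2$. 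If $G$ contains a cycle, I would show that $G$ admits a spanning tree $T'$ which is not a star (if some spanning tree has star center $v$, then since $G \ne K_{1,n-1}$ there exists an edge of $G$ not incident to $v$, and swapping it in breaks the star), and then apply the tree case to $T'$, coloring the non-tree edges arbitrarily.

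The main obstacle is verifying that the two-pendant-edges-same-color construction is conflict-free on every $T$-path. Any path of $T$ containing at most one of $e_1, e_2$ is rainbow and therefore conflict-free; the delicate case is a path that uses both $e_1$ and $e_2$. Since $u_1 \ne u_2$, such a path must also traverse the nontrivial $u_1$--$u_2$ path in $T$, and every edge of that interior portion carries a unique color, supplying the required uniquely colored edge. Once this is checked, all three claims of the lemma follow.
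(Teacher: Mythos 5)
The paper does not prove this lemma at all: it is quoted from \cite{CHLMZ} as a known result, so there is no in-paper argument to compare with, and your proposal has to stand on its own as a self-contained proof. On that footing it is essentially correct. The bounds, the $K_n$ characterization, and the forward star direction are all handled properly, and the key construction (two pendant edges with distinct support vertices sharing one color, all other tree edges rainbow) does yield $cfc(T)\le n-2$ for every non-star tree: your case analysis is right, since the color class of size two can only be doubly used on the unique $v_1$--$v_2$ path, whose interior contains at least one uniquely colored edge because $u_1\neq u_2$. The reduction of the general case to a spanning tree is also legitimate, because conflict-freeness of a path depends only on the colors of its own edges, so non-tree edges may be recolored with already-used colors without harm. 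The one place the write-up is not literally correct is the claim that a graph with a cycle and $G\neq K_{1,n-1}$ always has a non-star spanning tree: for $n=3$ the only such graph is $K_3$, every spanning tree of which is $K_{1,2}$, so the edge-swap does not ``break the star.'' This costs nothing, since $K_3$ is complete and already has $cfc=1=n-2$ by your second claim, but you should add that one-line remark (or restrict the swap argument to $n\ge 4$) to close the case analysis.
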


A \emph{$k$-edge ranking} of a connected graph $G$ is a labeling of its edges
with labels $1,\ldots,k$ such that every path between two edges with the same
label $i$ contains an edge with label $j > i$. The minimum number $k$ needed
for a $k$-edge ranking of $G$ is called the \emph{edge ranking number} and
denoted by $rank(G)$. An edge ranking is \emph{optimal} if it uses $rank(G)$
distinct labels. The edge ranking problem has been studied by a number of
researchers as they found applications in different context
\cite{BDJKKMT,IRV, TGS}. This problem is now known to be NP-hard for general
graphs \cite{LY}. Nonetheless, in most applications, the graphs concerned are restricted to trees only, this initiates the study of edge ranking of trees.

With respect to trees, Iyer et al. \cite{IRV} first gave an approximation
algorithm for finding an edge ranking with size at most twice the optimal.
de la Torre et al. \cite{TGS} are the first to devise a polynomial time
algorithm for finding an optimal edge ranking, which takes
$O(n^3\log n)$ time. Then there are several other papers in the literature on developing faster algorithms for the edge ranking problem, and the best known algorithm requires linear time \cite{LY2}.

\begin{lem}{\upshape \cite{LY2}}\label{lem1-0}
An optimal edge ranking of a tree with $n$ vertices can be computed
in $O(n)$ time.
\end{lem}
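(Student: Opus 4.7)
The plan is to design an algorithm rather than attack the statement directly. Root the tree $T$ at an arbitrary vertex and process it in post-order. For each rooted subtree $T_v$, maintain a compact summary $L(v)$: a strictly decreasing sequence of ranks recording the labels that would still be ``active'' at $v$ under an optimal ranking of $T_v$, i.e.\ the ranks appearing on the descending spine from $v$ toward the highest-labeled edge of $T_v$. The crucial reduction is to show that $L(v)$ alone, together with the label chosen on the parent edge, is a sufficient statistic for combining $T_v$ with the rest of the tree, so that the DP state stays small (of size $O(\log n)$ in nice cases, and always bounded by the length of a single root path).

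The central structural claim to prove is that $L(v)$ is obtained from the summaries of $v$'s children by first extending each child list $L(u)$ with the rank placed on the edge $uv$, and then ``carrying'': whenever two entries across the combined lists share a rank $r$, replace them by a single entry of rank $r+1$, cascading until no collisions remain. This is directly analogous to binary addition. Correctness should follow from an exchange argument on edge rankings: any optimal ranking can be transformed into one of this canonical form without increasing the number of labels, because a would-be collision of rank $r$ along two distinct paths leaving $v$ is forced, in any valid ranking, to be separated by a strictly larger label on the unique edge bridging the offending pair --- so the greedy carry merely commits to the smallest label compatible with this constraint.

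The $O(n)$ bound is the more delicate part. I would use a potential argument with $\Phi = \sum_v |L(v)|$. Each carry consumes two entries and produces one, strictly decreasing $\Phi$, while each non-carry step of a merge performs $O(1)$ real work. Since $\Phi$ is nonnegative throughout and the total number of entries ever inserted is at most the number of edges, namely $n-1$, the total cost is $O(n)$. The main obstacle I expect is the implementation: the merge must detect collisions and identify the correct insertion points in constant amortized time per entry, which requires representing each $L(v)$ as a sorted linked list (indexed by rank) and splicing children into the parent rather than copying them. The subtle piece is to charge the ``scanning'' cost of a merge --- as opposed to the carries themselves --- against either the carries it triggers or the fresh entries it inserts, without double counting; this is where the bulk of the technical bookkeeping lives, and is the step I would expect to reread most carefully in the actual proof of \cite{LY2}.
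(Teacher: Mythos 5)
The lemma you are asked about is an external result quoted from \cite{LY2}; the paper itself gives no proof, so your sketch has to be measured against the known algorithm, and there it has a concrete gap. What you describe is essentially the critical-list / ``binary carry'' dynamic programming that solves \emph{vertex} ranking of trees in linear time, and it does not transfer to \emph{edge} ranking. In an edge ranking, any two edges sharing an endpoint must receive distinct labels, because the path joining them has no interior edge on which a strictly larger label could sit; hence at a vertex $v$ with $d$ children the $d$ edges from $v$ to its children already need $d$ pairwise distinct labels. Your merge rule resolves a rank-$r$ collision between two child lists by replacing the two entries with a single entry of rank $r+1$, i.e.\ by assuming some edge above the collision can absorb it. But $v$ has only one parent edge, there can be many simultaneous collisions among the child edges, and the colliding labels are already fixed on edges inside the subtrees --- there is no ``edge bridging the offending pair'' available to carry the larger label. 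Already for the star $K_{1,k}$ your carry rule would compress everything to a list of length $O(\log k)$, whereas $rank(K_{1,k})=k$.

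The real difficulty, which your reduction hides inside the phrase ``the rank placed on the edge $uv$,'' is precisely the choice of pairwise distinct labels for all child edges of each internal vertex so as to minimize the resulting visible-label summary; this assignment subproblem is what separates edge ranking from vertex ranking (the first exact algorithm, by de la Torre, Greenlaw and Sch\"affer, runs in $O(n^3\log n)$), and the Lam--Yue linear-time algorithm devotes most of its machinery to solving it within the amortized budget. Your potential-function argument for the $O(n)$ bound is the lesser issue: without a correct combination step there is nothing for the exchange argument or the amortization to certify. As written, the proposal proves (a version of) the vertex-ranking result, not Lemma \ref{lem1-0}.
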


In \cite{CJV}, the authors gave the relationship between the conflict-free
connection number and the edge ranking number of general graphs.

\begin{lem}{\upshape \cite{CJV}}\label{lem1-4}
If $G$ is a connected graph, then $cfc(G) \leq rank(G)$.
\end{lem}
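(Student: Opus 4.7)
The plan is to show that any optimal edge ranking of $G$, viewed as an edge coloring, already serves as a conflict-free connection coloring. So I would take $f : E(G) \to \{1, \ldots, rank(G)\}$ to be an optimal edge ranking and argue that for every pair of distinct vertices $u, v \in V(G)$, there is a $u$--$v$ path that is conflict-free under $f$.

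My key step would be the following observation: on \emph{any} $u$--$v$ path $P$ in $G$, the largest label appearing on $P$ is attained by exactly one edge of $P$. To see this, let $i = \max_{e \in E(P)} f(e)$, and suppose for contradiction that two distinct edges $e_1, e_2 \in E(P)$ both received the label $i$. The subpath $P'$ of $P$ between $e_1$ and $e_2$ is then a path in $G$ joining two edges of the same label $i$, so by the defining property of an edge ranking it must contain an edge of label strictly greater than $i$. But every edge of $P'$ lies on $P$, contradicting the maximality of $i$ on $P$. Hence the maximum label on $P$ is realized uniquely, and that uniquely labeled edge supplies the required color for $P$ to be a conflict-free path.

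Choosing any one $u$--$v$ path (e.g., an arbitrary path produced by a BFS from $u$) and applying the above observation gives a conflict-free $u$--$v$ path. Since $u, v$ were arbitrary, $f$ is a conflict-free connection coloring using $rank(G)$ colors, yielding $cfc(G) \leq rank(G)$.

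I do not expect a genuine obstacle here: once the edge-ranking axiom is unpacked, the uniqueness of the maximum-label edge on any chosen path is essentially immediate. The only point requiring a moment of care is to note that we are free to pick a single path between $u$ and $v$ (rather than having to handle all paths simultaneously), which is exactly what the definition of conflict-free connection coloring allows.
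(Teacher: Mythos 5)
Your proof is correct; in fact the paper does not prove this lemma at all but cites it from [CJV], and your argument — that in an edge ranking the maximum label on any fixed $u$--$v$ path must occur exactly once, so every path is automatically conflict-free — is exactly the standard argument behind that cited result.
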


\begin{thm}{\upshape \cite{CJV}}\label{thm1}
If $G$ is a noncomplete $2$-connected graph, then $cfc(G)=2$.
\end{thm}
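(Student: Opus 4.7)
The plan is to establish the two inequalities $cfc(G)\geq 2$ and $cfc(G)\leq 2$ separately. The lower bound is immediate from Lemma~\ref{lem1-1}: since $G$ is noncomplete, $cfc(G)\neq 1$, hence $cfc(G)\geq 2$.

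For the upper bound, the strategy is to exhibit one explicit $2$-coloring. Pick any edge $e\in E(G)$, assign color $1$ to $e$, and color every other edge with color $2$. If for each pair of distinct vertices $u,v$ I can produce a $u$-$v$ path in $G$ that passes through $e$, then such a path carries color $1$ on exactly one of its edges (namely $e$), and is therefore conflict-free; this immediately yields $cfc(G)\leq 2$.

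The essential step is therefore the claim: in a $2$-connected graph $G$, for every edge $e=xy$ and every pair of distinct vertices $u,v$, there is a $u$-$v$ path containing $e$. My approach is to subdivide $e$ by inserting a new vertex $w$ between $x$ and $y$; the resulting graph $G'$ is still $2$-connected, and since $w$ has degree $2$ in $G'$, any $u$-$v$ path through $w$ in $G'$ corresponds exactly to a $u$-$v$ path through $e$ in $G$. It thus suffices to show that in any $2$-connected graph on at least three vertices, any three distinct vertices $u,v,w$ lie on a common path from $u$ to $v$ through $w$. To prove this, I would add an auxiliary vertex $z$ adjacent to both $u$ and $v$; the augmented graph is still $2$-connected, so by Whitney's theorem $z$ and $w$ lie on a common cycle $C$. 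Since $z$ has degree exactly $2$, the cycle $C$ must contain the edges $zu$ and $zv$, and removing $z$ from $C$ yields the desired $u$-$v$ path through $w$.

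The main obstacle is this auxiliary lemma about $2$-connected graphs; the degenerate cases where $u$ or $v$ coincides with $x$ or $y$ (so that the three-distinct-vertex hypothesis fails in the subdivided graph) are handled separately using the fact that no vertex of a $2$-connected graph is a cut vertex. For example, if $u=x$ and $v\notin\{x,y\}$, I would take the edge $e$ followed by any $y$-$v$ path in $G-x$, which exists because $G-x$ is connected. Combining the lemma with these boundary cases supplies a conflict-free $u$-$v$ path for every pair under the proposed $2$-coloring, completing the proof of $cfc(G)\leq 2$ and hence of the theorem.
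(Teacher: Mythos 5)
Your proof is correct. Note first that the paper you were asked to match does not actually prove this statement: Theorem~\ref{thm1} is quoted from \cite{CJV} without proof, so your argument is a self-contained reconstruction rather than a deviation from anything in the text. The two halves both check out. The lower bound via Lemma~\ref{lem1-1} is exactly right. For the upper bound, the coloring ``one distinguished edge $e$ gets color $1$, everything else color $2$'' works provided every pair $u,v$ has a $u$--$v$ path through $e$, and your proof of that property is sound: subdividing $e$ preserves $2$-connectedness, and the three-vertex lemma (a $u$--$v$ path through a prescribed vertex $w$) follows by your auxiliary-vertex trick, since the added vertex $z$ has degree $2$, the graph $G+z$ is still $2$-connected when $u\neq v$, and a cycle through $z$ and $w$ must use both edges $zu$ and $zv$. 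One small remark: the ``degenerate cases'' you set aside are not actually needed, because the subdivision vertex $w$ is a brand-new vertex, so $u$, $v$, $w$ are automatically three distinct vertices of the subdivided graph even when $u$ or $v$ is an endpoint of $e$; the general argument already covers those situations (when $u=x$ and $v=y$ it simply returns the single-edge path $e$, which carries color $1$ exactly once). This redundancy is harmless. Your route -- reduce to the classical fact that in a $2$-connected graph any two vertices are joined by a path through any prescribed edge -- is the natural way to prove the cited theorem, and it buys a completely elementary proof using only Whitney's cycle theorem and standard closure properties of $2$-connectedness.
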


In \cite{CHLMZ}, the authors weaken the condition of the above theorem
and got the following result.

\begin{thm}{\upshape \cite{CHLMZ, Deng}}\label{thm2}
Let $G$ be a noncomplete $2$-edge-connected graph. Then $cfc(G)=2$.
\end{thm}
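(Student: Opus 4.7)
\medskip
\noindent\textbf{Proof proposal.} The lower bound $cfc(G)\ge 2$ is immediate from Lemma~\ref{lem1-1}: since $G$ is noncomplete, $cfc(G)\ne 1$. The work is therefore in producing a conflict-free connection $2$-coloring, i.e., in establishing $cfc(G)\le 2$.

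My plan is to reduce to the $2$-connected case, already handled by Theorem~\ref{thm1}, via the block-cut-vertex decomposition of $G$. Because $G$ is $2$-edge-connected, $G$ is bridgeless, so every block of $G$ is a $2$-connected subgraph on at least three vertices. If $G$ is a single block then Theorem~\ref{thm1} immediately yields $cfc(G)=2$. Otherwise I would argue by induction on the number of blocks: peel off a leaf-block $B$ of the block tree, let $c$ be the unique cut vertex of $G$ in $B$, set $G'=G-(V(B)\setminus\{c\})$, and apply the inductive hypothesis to $G'$ (handling separately the small cases where $G'$ becomes a single block or a complete graph). This produces a conflict-free $2$-coloring of $G'$, which I then extend to $E(B)$.

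The principal obstacle is this extension step: the coloring of $B$ must simultaneously make $B$ internally conflict-free connected and cooperate with the existing coloring of $G'$ so that for every $u\in V(B)\setminus\{c\}$ and every $v\in V(G')\setminus\{c\}$ there is a conflict-free $u$-$v$ path in $G$, which necessarily crosses $c$ and hence is the concatenation of a subpath inside $B$ and one inside $G'$. Internal conflict-freeness of $B$ is furnished by Theorem~\ref{thm1} (or by $cfc(K_r)=1$ when $B$ is complete). To arrange the cross-block paths I would refine the coloring of $B$ so that its color-$1$ edges span a connected subgraph containing $c$; this can be enforced in the construction underlying Theorem~\ref{thm1}, for instance by basing it on a spanning tree of $B$ colored in color~$1$. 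Then any $u\in V(B)$ has a monochromatic color-$1$ $u$-$c$ subpath inside $B$, and prepending it to the inductively chosen $c$-$v$ path in $G'$ preserves the uniqueness of that path's distinguishing color whenever that color is~$2$. The complementary case, when every conflict-free $c$-$v$ path in $G'$ has color~$1$ as its unique color, is the most delicate: it is handled either by swapping the roles of the two colors inside $B$ or by re-selecting the $c$-$v$ path in $G'$ using the flexibility of the inductive coloring. Treating this case uniformly across all possible shapes of $B$ (notably when $B$ is a cycle, which has only one color-$2$ edge available) is the crux of the argument, and closes the induction to give $cfc(G)=2$.
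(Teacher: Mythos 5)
Note first that the paper itself offers no proof of this statement: it is quoted from \cite{CHLMZ,Deng}, so your argument has to stand entirely on its own. The lower bound is fine (noncomplete gives $cfc(G)\neq 1$ by Lemma~\ref{lem1-1}), but the upper bound is not proved; the two steps that carry all of the weight in your block-tree induction are exactly the ones you leave open. First, you assert that the $2$-coloring of a noncomplete $2$-connected block $B$ provided by Theorem~\ref{thm1} can be ``refined'' so that the color-$1$ edges span a connected subgraph of $B$ containing the cut vertex $c$, e.g.\ by giving a spanning tree of $B$ color $1$. Theorem~\ref{thm1} is a black box: its statement guarantees no such structural property, and ``spanning tree in color $1$, all remaining edges in color $2$'' is not shown (by you or by anything quoted in the paper) to be a conflict-free connection coloring of every noncomplete $2$-connected graph. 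What you need here is a strengthened form of Theorem~\ref{thm1}, and proving that strengthening is real work, not a remark. Second, the cross-block case in which, for some $v\in V(G')$, every conflict-free $c$--$v$ path in $G'$ has color $1$ as its unique color is precisely where the difficulty sits, and you explicitly leave it as ``the crux.'' Swapping the two colors inside $B$ is not a uniform remedy: different vertices $v,v'$ of $G'$ may force different colors, so a fixed $u\in V(B)$ would need both a monochromatic color-$1$ and a monochromatic color-$2$ $u$--$c$ path inside $B$ (or some other escape), which your construction does not supply; and ``re-selecting the path using the flexibility of the inductive coloring'' is not an argument unless the induction hypothesis is strengthened to record that flexibility.

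The complete-block cases you set aside are also not cosmetic. Take $G$ to be two triangles sharing a vertex: it is $2$-edge-connected and noncomplete, both blocks are complete, and if each block is colored with a single color (as the appeal to $cfc(K_r)=1$ suggests), then every path between a non-cut vertex of one triangle and a non-cut vertex of the other has length at least two and is monochromatic, hence is not conflict-free. So the extension step must recolor complete blocks with both colors and must coordinate that recoloring with $G'$; the inductive statement therefore has to be stronger than ``$G'$ has a conflict-free $2$-coloring'' (for instance it must guarantee, for every cut vertex, suitable paths of prescribed color pattern to all other vertices). As written, the proposal is a plausible program with the decisive lemmas missing, not a proof of $cfc(G)\leq 2$.
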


Let $C(G)$ be the subgraph of $G$ induced by the set of cut-edges of
$G$. It is easy to see that every component of $C(G)$ is a tree and hence
$C(G)$ is a forest. Let
$h(G)=\text{max}\{cfc(T) : \text{T is a component of}\ C(G)\}$.
For a graph $G$ with cut-edges, the authors of \cite{CJV} gave lower and upper
bounds of $cfc(G)$ in terms of $h(G)$.

\begin{thm}{\upshape \cite{CJV}}\label{thm3}
If $G$ is a connected graph with cut-edges, then
$h(G)\leq cfc(G)\leq h(G)+1$.
Moreover, the bounds are sharp.
\end{thm}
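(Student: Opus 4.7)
The plan is to establish the two inequalities of Theorem~\ref{thm3} separately, then conclude with examples showing both bounds are sharp.

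For the lower bound $h(G)\le cfc(G)$: fix any optimal conflict-free connection coloring $c$ of $G$ and any component $T$ of $C(G)$. I will show that the restriction of $c$ to $E(T)$ is itself a conflict-free connection coloring of $T$; this immediately gives $cfc(T)\le cfc(G)$, and taking the maximum over all components yields $h(G)\le cfc(G)$. The crucial structural fact I would prove first is that for any two vertices $u,v\in V(T)$, the unique $T$-path $P_T$ from $u$ to $v$ is in fact the \emph{only} simple $u$--$v$ path in $G$. Two observations drive this: each edge of $P_T$ is a cut-edge of $G$, so any $u$--$v$ path must use every edge of $P_T$; and between two consecutive edges of $P_T$ meeting at a vertex $w$, no simple $u$--$v$ path can detour through $w$'s 2-edge-connected block without revisiting $w$. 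Hence the conflict-free $u$--$v$ path supplied by $c$ coincides with $P_T$, so $P_T$ carries a uniquely colored edge under $c|_{E(T)}$.

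For the upper bound $cfc(G)\le h(G)+1$: put $k=h(G)$ and construct a coloring of $G$ using the palette $\{1,\ldots,k+1\}$. For each component $T_i$ of $C(G)$, choose an optimal conflict-free connection coloring using the sub-palette $\{1,\ldots,k\}$; for each 2-edge-connected block $B$ of $G$, choose a conflict-free connection coloring of $B$ using at most two colors from $\{1,k+1\}$, which is possible by Theorem~\ref{thm2}. I would then verify conflict-freeness on $G$ by cases on the location of $u$ and $v$: if they share a 2-edge-connected block, the block coloring supplies a CF path directly; if both lie in $V(T_i)$ for some $i$, the unique $u$--$v$ path in $G$ is the $T_i$-path itself and is CF by construction; in the remaining mixed case, the $u$--$v$ path is obtained by concatenating intra-block subpaths (colored from $\{1,k+1\}$) with the forced cut-edge subsequence (colored from $\{1,\ldots,k\}$), and one must choose the intra-block subpaths so that the composite path inherits a uniquely colored edge. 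This last step is the main obstacle, and requires careful use of the fresh color $k+1$ reserved for block interiors to isolate a unique color on the composite path.

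Sharpness is easy to exhibit: any tree $G$ satisfies $C(G)=G$, giving $cfc(G)=h(G)$; while a triangle with a single pendant edge satisfies $h(G)=1$ and $cfc(G)=2=h(G)+1$.
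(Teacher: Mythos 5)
A preliminary remark: the paper does not prove Theorem~\ref{thm3} at all --- it is quoted from \cite{CJV} --- so your attempt can only be measured against what a complete proof requires. Your lower-bound argument is sound and essentially complete in outline: every edge of the tree path $P_T$ between $u,v\in V(T)$ is a cut-edge separating $u$ from $v$, so every $u$--$v$ path of $G$ must contain all of $E(P_T)$, and since a simple path visits each internal vertex of $P_T$ only once it must coincide with $P_T$; hence the restriction of an optimal coloring of $G$ to $E(T)$ is a conflict-free connection coloring of $T$, giving $h(G)\leq cfc(G)$. The two sharpness examples (a tree, and a triangle with a pendant edge) are also correct.

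The upper bound, however, contains a genuine gap, and it sits exactly where you yourself flag ``the main obstacle.'' Coloring each component of $C(G)$ with an arbitrary optimal coloring from $\{1,\ldots,k\}$ and each $2$-edge-connected block with an arbitrary two-coloring from $\{1,k+1\}$ supplied by Theorem~\ref{thm2} does not suffice. On a mixed $u$--$v$ path the forced cut-edges may belong to several different components of $C(G)$, whose colorings are mutually uncorrelated, so no color in $\{1,\ldots,k\}$ can be guaranteed to appear exactly once; the uniquely occurring color must therefore be $k+1$, which means you must be able to cross every intermediate block while avoiding $k+1$, except for exactly one block portion on which $k+1$ is used exactly once. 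Theorem~\ref{thm2} only guarantees that each block is conflict-free connected \emph{in isolation}: between the two attachment vertices of a block the available conflict-free paths might all use $k+1$ zero or two times while color $1$ occurs once, and concatenating two such block portions with cut-edges also colored $1$ yields a path with no uniquely occurring color. A correct completion needs a strictly stronger structural lemma about colorings of $2$-edge-connected graphs (for instance, a two-coloring in which the old color spans the block, so that $k+1$ can be avoided between any two of its vertices, together with the option of using $k+1$ exactly once where needed); establishing and then exploiting such a lemma is the real content of the upper bound in \cite{CJV}, and your proposal defers it rather than proving it.
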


Recently, the authors in \cite{CHLMZ} gave a sufficient condition
such that the lower bound is sharp for $h(G)\geq 2$.

\begin{thm}{\upshape \cite{CHLMZ}}\label{thm4}
Let $G$ be a connected graph with $h(G)\geq 2$. If there exists a
unique component $T$ of $C(G)$ satisfying $(i)$ $cfc(T)=h(G)$, $(ii)$
$T$ has an optimal conflict-free connection coloring such that
one color is used only once, then $cfc(G)=h(G)$.
\end{thm}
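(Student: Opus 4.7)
Since Theorem~\ref{thm3} already provides $cfc(G)\ge h(G)$, the whole task reduces to exhibiting a conflict-free connection coloring of $G$ that uses only $h(G)$ colors. The natural starting point is to fix the optimal coloring $c_T\colon E(T)\to\{1,\ldots,h(G)\}$ guaranteed by hypothesis~(ii), and let $\alpha$ denote the color that appears on a single edge $e^{*}=xy\in E(T)$. My plan is to extend $c_T$ to an edge coloring $c$ of the whole of $E(G)$ in such a way that an obvious conflict-free path can be exhibited for every pair of vertices.

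The extension proceeds as follows. Hypothesis~(i), the uniqueness of $T$, forces every other component $T_j$ of $C(G)$ to satisfy $cfc(T_j)\le h(G)-1$, so I can color $T_j$ by an optimal conflict-free connection coloring whose palette is $\{1,\ldots,h(G)\}\setminus\{\alpha\}$. For every maximal $2$-edge-connected subgraph $B$ of $G$, Theorem~\ref{thm2} (or Lemma~\ref{lem1-1} when $B$ is complete) supplies a conflict-free connection coloring of $B$ using at most two colors; when $h(G)\ge 3$ these two colors can be kept inside $\{1,\ldots,h(G)\}\setminus\{\alpha\}$, so $\alpha$ occurs only at $e^{*}$ in all of $G$. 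The delicate case is $h(G)=2$, where only one color remains once $\alpha$ is excluded and a non-complete block $B$ must therefore reuse $\alpha$, making $\alpha$ non-unique globally; here I would replace the uniform recipe by a ``mostly-one-color'' coloring of each such block --- a single edge in one color, everything else in the other --- with the special edge chosen in a way dictated by the cut-edges of $T$ touching that block.

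Conflict-freeness is then checked pair by pair. If $u,v$ are separated by $e^{*}$, every $u$-$v$ path of $G$ must traverse $e^{*}$; so when $h(G)\ge 3$ any such path is automatically conflict-free with $\alpha$ as its unique color, and when $h(G)=2$ one routes through each intervening block so as to avoid the additional copies of $\alpha$, which is exactly the role of the carefully placed ``special'' edge. If instead $u,v$ lie on the same side of $e^{*}$, then the $u$-$v$ route stays inside a single component of $G-e^{*}$ and is assembled from subpaths inside cut-edge trees $T_j$ and inside blocks $B$; the local conflict-free colorings supplied above yield, by a routine routing choice inside each block, a conflict-free $u$-$v$ path. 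The main obstacle I anticipate is the base case $h(G)=2$, where one must verify that the ``mostly-one-color'' block colorings and the coloring of $T$ can be made simultaneously compatible so that \emph{every} pair of vertices in $G$ admits a conflict-free path. This compatibility appears to require a small structural lemma about $2$-edge-connected graphs (beyond the bare statement of Theorem~\ref{thm2}) together with a careful case analysis based on how the special edges of the blocks sit with respect to $e^{*}$.
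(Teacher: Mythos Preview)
First, note that the paper does not actually prove Theorem~\ref{thm4}; it is quoted from \cite{CHLMZ} as background, with no argument given. So there is no ``paper's proof'' to compare against, and your proposal must be judged on its own.

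Your outline is on the right track for the easy half: when $h(G)\ge 3$ and $u,v$ lie on opposite sides of the bridge $e^{*}$, reserving the color $\alpha$ for $e^{*}$ alone does force every $u$--$v$ path to be conflict-free. The genuine gap is the ``same side of $e^{*}$'' case, which you dismiss as ``routine routing.'' It is not. Local conflict-free colorings of the pieces do \emph{not} concatenate to a conflict-free path, because the unique color on one segment can be repeated on another. Concretely, take $h(G)=3$ with $T=P_5$ colored $1,2,1,3$ (so $\alpha=3$ sits on the last edge $e^{*}$), attach a $4$-cycle $B_1=efgh$ at the $e^{*}$-side endpoint, hang a single cut-edge $gp$ (this is a component $T_1$ with $cfc(T_1)=1$), and attach another $4$-cycle $B_2=pqrs$ at $p$. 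Following your recipe, color $B_1$ and $B_2$ alternately with $\{1,2\}$ and set $c(gp)=1$. Then every $f$--$r$ path uses only colors $1$ and $2$, and one checks that each of the four possible routes (via $fg$ or $feh g$, and via $pq r$ or $psr$) has both colors repeated. So the coloring your recipe produces need not be conflict-free at all; some coordination between the block colorings and the colorings of the other bridge-trees $T_j$ is required, and your proposal neither specifies it nor proves it can always be achieved.

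You already flag the $h(G)=2$ case as delicate, but the example above shows that even $h(G)\ge 3$ needs a real argument. What is missing is a structural statement about how to color the $2$-edge-connected blocks (for instance, that each block admits a $2$-coloring with a designated ``special'' edge through which one can always route, and a rule for placing those special edges relative to the attached bridges) together with an induction or case analysis on the block--bridge tree that actually produces a conflict-free path for every pair. Until that is supplied, the proposal is a plan rather than a proof.
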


So, the problem of determining the value of $cfc(G)$ for a graph $G$
without bridges or cut-edges is completely solved. The rest graphs all
have cut-edges. Such extremal graphs are trees for which every edge is
a cut-edge. And by the above theorem, to determine the conflict-free
connection number of general graphs relies on determining the conflict-free
connection number of trees, with an error of only one. Next, we present some
known results on the conflict-free connection number of trees.

\begin{lem}{\upshape \cite{CJV}}\label{lem1-2}
If $P_n$ is a path on $n$ vertices, then $cfc(P_n)=\lceil
\log_2 n\rceil $.
\end{lem}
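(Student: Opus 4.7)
The plan is to prove the equality $cfc(P_n)=\lceil \log_2 n\rceil$ by separately establishing matching upper and lower bounds, both via divide-and-conquer on the middle edge of the path. Label the edges of $P_n$ in order as $e_1,\ldots,e_{n-1}$, and set $k=\lceil \log_2 n\rceil$. The key structural fact to exploit throughout is that in a path every subtree is again a sub-path, so an edge-coloring is a conflict-free connection coloring if and only if every contiguous sub-interval of the edge sequence contains some color appearing exactly once on that interval.

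For the upper bound $cfc(P_n)\le k$, the plan is to give a recursive construction. Pick a middle edge $e_m$ with $m=\lceil (n-1)/2\rceil$, assign it the fresh color $k$, and color the two remaining sub-paths $Q_1,Q_2$ (each on at most $\lceil n/2\rceil$ vertices) recursively using colors from $\{1,\ldots,k-1\}$; this is possible by the inductive hypothesis, since $\lceil \log_2 \lceil n/2\rceil\rceil\le k-1$. To verify conflict-freeness, take any sub-path $P'$ of $P_n$: either $P'\subseteq Q_i$ for some $i$, and the recursive coloring already supplies a uniquely used color on $P'$; or $P'$ contains $e_m$, in which case color $k$ appears on $P'$ exactly once, since it is used nowhere else in $P_n$. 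The base cases $n\in\{1,2\}$ are immediate.

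For the lower bound $cfc(P_n)\ge k$, the plan is an induction showing that any conflict-free connection coloring of $P_n$ using $k$ colors forces $n\le 2^k$. Given such a coloring $c$, apply the conflict-free property to the whole edge set $\{e_1,\ldots,e_{n-1}\}$: some color, say $\alpha$, appears on exactly one edge $e$ of $P_n$. Delete $e$ to obtain two sub-paths $Q_1,Q_2$ with $|V(Q_1)|+|V(Q_2)|=n$. The restriction $c|_{Q_i}$ uses at most $k-1$ distinct colors because $\alpha$ no longer appears. Moreover $c|_{Q_i}$ is still a conflict-free connection coloring of $Q_i$: any sub-path of $Q_i$ is also a sub-path of $P_n$, so it carries a uniquely used color in $c$, and that color (being different from $\alpha$) still occurs on the same edges in the restriction. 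By the inductive hypothesis $|V(Q_i)|\le 2^{k-1}$, giving $n\le 2^k$ and hence $k\ge \lceil \log_2 n\rceil$.

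The main obstacle I anticipate is the subtle verification in the lower-bound step that the coloring restricted to each sub-path $Q_i$ remains conflict-free while strictly losing a color; this relies on the path-specific fact that sub-paths of $Q_i$ are sub-paths of $P_n$, which is why the argument does not carry over verbatim to arbitrary trees. Everything else reduces to a clean halving recursion that matches the target logarithm.
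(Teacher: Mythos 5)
Your argument is correct, but note that the paper does not prove this lemma at all: it is quoted from \cite{CJV}, and the paper's own machinery reproves it in two separate places by different means. Your upper bound is essentially the paper's route: the halving recursion at a middle edge is exactly Algorithm $1$ with a balanced edge, and Lemma \ref{lem3-1} together with Theorem \ref{thm3-4} gives $cfc(P_n)\leq d(P_n)=\lceil\log_2 n\rceil$ by the same induction with the same bookkeeping $\lceil n/2\rceil\leq 2^{k-1}$. Your lower bound, however, genuinely differs from the paper's. You argue directly on the path: the whole path carries a color used exactly once, deleting that edge removes a color from the palette, and the restriction to each side remains conflict-free, giving $n\leq 2^k$ by induction on $k$. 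This is clean and more elementary, but, as you yourself observe, it is path-specific: after deleting the uniquely colored edge of a tree you cannot in general guarantee that a whole color class disappears from a component, so the argument does not extend. The paper instead proves the stronger Theorem \ref{thm2-5} (the lower bound $\lceil\log_2 n\rceil$ for \emph{every} tree, confirming Conjecture \ref{conj1-4}) via the parity-vector counting of Lemma \ref{lem2-4}: the $n-1$ paths from a fixed leaf must have pairwise distinct nonzero parity vectors in $\{0,1\}^{oc(T)}$, forcing $2^{oc(T)}-1\geq n-1$, and $oc\leq cfc$. So your proof buys simplicity for paths, while the paper's buys generality; both settle the path case correctly.
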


\begin{lem}{\upshape \cite{CJV}}\label{lem1-3}
If $T$ is a tree on $n$ vertices with maximum degree
$\Delta(T)\geq 3$ and diameter $diam(T)$, then
$$
\max\{\Delta(T),\log_2 diam(T)\}\leq cfc(T)\leq \frac{(\Delta(T)-2)\log_2 n}{\log_2\Delta(T)-1}.
$$
\end{lem}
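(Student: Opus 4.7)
The plan is to establish the lower and upper bounds separately. For the lower bound $\max\{\Delta(T),\log_2 diam(T)\}\le cfc(T)$, both estimates reduce to facts already in the text. First, $cfc(T)\ge \Delta(T)$ has been observed in the introduction: any conflict-free connection coloring of a tree is a proper edge-coloring (otherwise, two equally colored edges sharing a vertex $v$ produce a two-edge path between their far endpoints that admits no conflict-free edge), so at least $\chi'(T)=\Delta(T)$ colors are required. Second, $cfc(T)\ge\log_2 diam(T)$ follows by restricting an optimal coloring of $T$ to a diametral path $P$ on $diam(T)+1$ vertices: since $T$ is a tree, the unique $T$-path between any two vertices of $P$ lies inside $P$, so the restriction is itself a conflict-free connection coloring of $P$. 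Lemma~\ref{lem1-2} then yields $cfc(T)\ge cfc(P)=\lceil\log_2(diam(T)+1)\rceil\ge \log_2 diam(T)$.

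For the upper bound, I would construct an explicit coloring by induction on $n$. The idea is to root $T$ at a vertex $r$ chosen so that each of the (at most $\Delta$) subtrees $T_1,\dots,T_d$ hanging off $r$ has order at most roughly $n/(\Delta-1)$. One then uses $d\le\Delta$ pairwise distinct colors at the edges $rT_i$, of which at most $\Delta-2$ need to be fresh (the remaining ones can be reused from the palettes already used inside the $T_i$'s, provided we reserve the ``largest'' labels for the top-level edges at $r$). Any path inside a single $T_i$ is conflict-free by the induction hypothesis; any path that crosses $r$ uses an edge at $r$ whose color strictly dominates all colors appearing deeper, so that edge is its conflict-free witness. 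Iterating this argument geometrically gives a coloring using at most $(\Delta-2)\cdot(\text{recursion depth})$ colors, which one then rearranges into the stated bound.

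The main obstacle, and the step that needs most care, is securing the balanced rooted decomposition: an ordinary centroid only guarantees subtree sizes $\le n/2$, which would give the weaker bound $\Delta\log_2 n$ rather than the advertised denominator $\log_2\Delta(T)-1$. I expect the correct workaround is to first suppress maximal degree-two paths in $T$ (their contribution to $cfc$ is controlled by Lemma~\ref{lem1-2} and is absorbed into an additive $\log_2 n$ term), so that after contraction every internal vertex genuinely branches with factor at least $\Delta-1$. This forces the recursion depth down to $\log_{\Delta-1}n$, and combining with the $\Delta-2$ fresh colors used per recursive level produces the claimed bound after a final rewriting of $(\Delta-2)\log_{\Delta-1}n$ into the form $\tfrac{(\Delta-2)\log_2 n}{\log_2\Delta(T)-1}$.
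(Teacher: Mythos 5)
You should first note that the paper itself gives no proof of Lemma \ref{lem1-3}: it is quoted verbatim from \cite{CJV}, so there is no in-paper argument to compare yours with, and your attempt has to stand on its own. Its lower-bound half does: properness of any conflict-free connection coloring of a tree gives $cfc(T)\geq\chi'(T)=\Delta(T)$, and restricting an optimal coloring to a diametral path $P$ is legitimate because the unique $T$-path between two vertices of $P$ lies in $P$, so Lemma \ref{lem1-2} yields $cfc(T)\geq\lceil\log_2(diam(T)+1)\rceil\geq\log_2 diam(T)$.

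The upper-bound half, however, has a genuine gap. The balanced root you want does not exist: no vertex of a tree need have all hanging subtrees of order roughly $n/(\Delta-1)$ (take a spider with three long legs whose centre also carries $\Delta-3$ pendant leaves; every vertex leaves a component of order about $n/3$, and in general only the centroid bound $n/2$ is available). Your proposed repair, suppressing maximal degree-two paths, only forces internal vertices of the contracted tree to have degree at least $3$, i.e.\ branching factor $2$, not $\Delta-1$; the contracted tree can be essentially binary with a single high-degree vertex, so the recursion depth is of order $\log_2 n$, not $\log_{\Delta-1}n$. The accounting also fails to close even on its own terms: granting an additive $\log_2 n$ for the suppressed paths, the total $(\Delta-2)\log_{\Delta-1}n+\log_2 n$ exceeds $\frac{(\Delta-2)\log_2 n}{\log_2\Delta-1}$ already for $\Delta=8$ (per $\log_2 n$: $6/\log_2 7+1\approx 3.14$ versus $3$). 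In addition, the claim that only $\Delta-2$ of the colors at the root need to be fresh is unsupported as stated: if two root edges reuse colors occurring below, a path crossing the root through exactly those two edges has no edge guaranteed to carry a unique color, so you need an explicit rule governing which subtree may reuse which top colors. (Your last step, weakening $(\Delta-2)\log_{\Delta-1}n$ to $\frac{(\Delta-2)\log_2 n}{\log_2\Delta-1}$, is valid since $\Delta-1\geq\Delta/2$, but moot.) One workable route, in the spirit of edge rankings, is: cut at a centroid $v$, give the $\Delta-2$ largest components fresh top colors on their edges to $v$, let the component receiving the $i$-th top color reuse the top colors of smaller index (the $j$-th largest component has order at most $n/j$, which pays for this reuse), and recurse on $v$ together with the two smallest components, whose order is at most $1+2(n-1)/\Delta$; it is this last branch, not a global branching factor $\Delta-1$, that produces the denominator $\log_2\Delta-1$ in the claimed bound.
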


The following result indicates that when the maximum degree of a
tree is large, the conflict-free connection number is
immediately determined by its maximum degree.

\begin{thm}{\upshape \cite{CHLMZ}}\label{thm5}
Let $T$ be a tree of order $n$, and $t$ be a positive number such that
$t\leq \frac {n-2} {2}$. Then $cfc(T)=n-t$ if and only if $\Delta(T)=n-t\geq \frac {n+2} {2}$.
\end{thm}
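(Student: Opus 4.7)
The plan is to establish the two directions of the equivalence separately.

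For the ``if'' direction, assume $\Delta(T) = n - t \geq (n+2)/2$. The lower bound $cfc(T) \geq n - t$ is automatic because a cfc-coloring of any tree is proper, whence $cfc(T) \geq \chi'(T) = \Delta(T)$. For the matching upper bound I will construct an explicit cfc-coloring using exactly $n - t$ colors. Let $v$ be a vertex realizing the maximum degree, with neighbours $u_1, \ldots, u_{n-t}$, and let $T_i$ be the component of $T - v$ containing $u_i$, $n_i := |V(T_i)|$. Since $\sum_i (n_i - 1) = t - 1$, only $k \leq t - 1$ of the subtrees are ``big'' (have $n_i \geq 2$); reindex so that $T_1, \ldots, T_k$ are the big ones. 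I will assign color $i$ to the star edge $vu_i$ for every $i$, and cfc-color each big $T_i$ optimally, drawing colors only from the palette $\{k+1, \ldots, n-t\}$. This is feasible since the palette has size $n - t - k$ while $cfc(T_i) \leq n_i - 1 \leq t - k$ (the latter from $\sum (n_i - 1) = t - 1$ with $k$ summands each at least $1$), and the required inequality $n - t - k \geq t - k$ is precisely the hypothesis $t \leq (n-2)/2$. The cfc property is then immediate: paths staying inside a single $T_i$ are cfc by construction, while any path crossing $v$ and touching a big $T_i$ sees the color $i \leq k$ only on the star edge $vu_i$ (because internal colorings use colors strictly greater than $k$), so this color is unique on the path.

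For the ``only if'' direction, assume $cfc(T) = n - t \geq (n+2)/2$. Since $cfc(T) \geq \Delta(T)$ one has $\Delta(T) \leq n - t$, and I must rule out $\Delta(T) \leq n - t - 1$. The key observation is that the construction above in fact goes through whenever $\Delta(T) \geq n/2$, because its only numerical requirement, $n - t - k \geq t - k$, is equivalent to $n \geq 2(n - \Delta(T))$, i.e., $\Delta(T) \geq n/2$. Thus if $\Delta(T) \geq n/2$ we would obtain a cfc-coloring of $T$ with $\Delta(T) \leq n - t - 1$ colors, contradicting $cfc(T) = n - t$. For the remaining case $\Delta(T) < n/2$, I will invoke the sharp upper bound for $cfc$ of trees established earlier in the paper (refining Lemma~\ref{lem1-3}) to conclude $cfc(T) \leq n/2 < (n+2)/2 \leq n - t$, again a contradiction.

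The main obstacle is the case $\Delta(T) < n/2$ in the necessity direction: the bound of Lemma~\ref{lem1-3} is only marginally inadequate for $\Delta(T)$ close to $n/2$, so a sharper input is required, which is exactly what the paper's algorithmic upper bound provides. The sufficiency direction itself is clean once one notices the arithmetic matching: the hypothesis $t \leq (n-2)/2$ gives precisely enough room in the palette $\{k+1, \ldots, n-t\}$ to cfc-color every big subtree simultaneously, while reserving the first $k$ colors as unique markers for all cross-subtree paths through the star center.
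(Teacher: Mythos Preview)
First, note that the paper does not supply its own proof of this theorem: it is quoted from \cite{CHLMZ} as a known result, so there is nothing here to compare your proposal against. I therefore assess the argument on its own terms.

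Your ``if'' direction is correct and cleanly executed. The construction---color the $n-t$ star edges at a maximum-degree vertex $v$ with colors $1,\dots,n-t$, placing the $k$ big subtrees first, then cfc-color each big $T_i$ from the palette $\{k+1,\dots,n-t\}$---does yield a valid cfc-coloring, and the arithmetic check $n-t-k\ge t-k$ indeed follows from $t\le (n-2)/2$ (it is actually the weaker condition $2t\le n$, not ``precisely'' the hypothesis, but that is harmless). Your observation that the same construction works whenever $\Delta(T)\ge\lceil n/2\rceil$, forcing $cfc(T)=\Delta(T)$ in that range, is also correct and handles the first subcase of the ``only if'' direction.

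The genuine gap is the subcase $\Delta(T)<\lceil n/2\rceil$. You write that you will ``invoke the sharp upper bound for $cfc$ of trees established earlier in the paper (refining Lemma~\ref{lem1-3}) to conclude $cfc(T)\le n/2$,'' but no such bound is established anywhere in the paper. Lemma~\ref{lem1-3} gives $cfc(T)\le \frac{(\Delta-2)\log_2 n}{\log_2\Delta-1}$, which for $\Delta$ close to $n/2$ can exceed $n/2$ (e.g.\ $n=10$, $\Delta=4$ yields the bound $6>5$). The algorithmic bound of Theorem~\ref{thm3-4} only says $cfc(T)\le D(T)$, with no control on $D(T)$ that would give $D(T)\le n/2$ in this regime; indeed Lemma~\ref{lem3-0} provides only the trivial upper bound $d(T)\le n-1$. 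What you actually need is a statement of the form ``if $\Delta(T)<\lceil n/2\rceil$ then $cfc(T)\le\lceil n/2\rceil$.'' This is believable and is presumably what \cite{CHLMZ} proves (likely by induction on $n$, recursing into a well-chosen subtree), but it is a nontrivial assertion that requires its own argument and cannot be read off from anything in the present paper.
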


It is easy to obtain the following result for trees with diameter
$3$.

\begin{lem}\label{lem1-6}
Let $S_{a,b}$ be a tree with diameter
$3$ such that the two non-leaf vertices have degrees $a$ and $b$, Then $cfc(S_{a,b})=\Delta(S_{a,b})$.
\end{lem}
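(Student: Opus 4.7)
The plan is to sandwich $cfc(S_{a,b})$ between $\Delta(S_{a,b})$ and itself. Without loss of generality assume $a\geq b\geq 2$, so $\Delta(S_{a,b})=a$. Write $u,v$ for the two non-leaf vertices, let $u_1,\ldots,u_{a-1}$ be the leaves adjacent to $u$ and $v_1,\ldots,v_{b-1}$ be the leaves adjacent to $v$. For the lower bound I will use the fact already recorded in the introduction that any conflict-free connection coloring of a tree is a proper edge-coloring, which immediately gives $cfc(S_{a,b})\geq \chi'(S_{a,b})=\Delta(S_{a,b})=a$.

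For the upper bound I will exhibit an explicit coloring with $a$ colors. Assign color $i$ to the edge $uu_i$ for $i=1,\ldots,a-1$, assign color $a$ to the central edge $uv$, and assign color $j$ to the edge $vv_j$ for $j=1,\ldots,b-1$. The coloring is proper at both $u$ and $v$ (since $b-1<a$, no edge at $v$ reuses the color $a$ of $uv$). Then I will go through the short list of path-types in $S_{a,b}$: a $u_i u_j$-path uses two distinct colors $i,j$; a $v_i v_j$-path uses two distinct colors; a $u_i v$- or $u v_j$-path uses two distinct colors (one of them being $a$); and the only case requiring attention is a $u_iv_j$-path, whose edges receive colors $i,a,j$ with $i,j\in\{1,\ldots,\max(a-1,b-1)\}$. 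Since $a$ does not appear on either outer edge, color $a$ is the unique color on this path, whether or not $i=j$. Hence the coloring is conflict-free connected and $cfc(S_{a,b})\leq a$.

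There is no real obstacle here; the only point worth being careful about is to place color $a$ on the central edge $uv$ so that it is guaranteed to serve as the ``unique-color witness'' on every diameter path $u_iv_j$, where the two outer edges may accidentally coincide in color. Once this placement is fixed, combining the two bounds yields $cfc(S_{a,b})=\Delta(S_{a,b})$, as claimed.
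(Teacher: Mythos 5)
Your proposal is correct and follows essentially the same route as the paper: the lower bound comes from $cfc(T)\geq\Delta(T)$ for trees, and the upper bound from a coloring in which the $a$ edges at the maximum-degree vertex get distinct colors while the pendant edges at the other non-leaf vertex reuse colors, crucially avoiding the color of the central edge $uv$ so that it witnesses conflict-freeness on every path through it. Your explicit assignment and case check of path types is just a more detailed verification of the same construction the paper gives.
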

\begin{proof} By Lemma \ref{lem1-3}, we have
$cfc(S_{a,b})\geq\Delta(S_{a,b})=\max\{a,b\}$.
It remains to prove the matching lower bound.
Without loss of generality, we assume that the non-leaf
vertex $u$ has maximum degree $a$ and the other non-leaf
vertex $v$ has degree $b$. We provide an edge-coloring of
$S_{a,b}$ with $a$ colors: assign $a$ distinct colors to the
edges incident with $u$, then for the remaining edges, assign
$b-1$ distinct used colors which do not contain the color
on the edge $uv$. It is obvious that this is a conflict-free
connection coloring of $S_{a,b}$.
Thus, $cfc(S_{a,b})\leq\Delta(S_{a,b})=\max\{a,b\}$.
\end{proof}

Recently, Li and Wu proposed the following conjecture in \cite{LW}.

\begin{conjecture}{\upshape \cite{LW}}\label{conj1-4}
For a tree $T$ of order $n$,
$cfc(T)\geq cfc(P_n)=\lceil \log_{2} n\rceil$.
\end{conjecture}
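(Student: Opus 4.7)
The plan is to give a direct encoding argument. Suppose $T$ admits a conflict-free connection coloring $c: E(T)\to\{1,\dots,k\}$ with $k$ colors; I want to conclude that $n\le 2^{k}$, which is equivalent to $k\ge\lceil\log_{2}n\rceil$. To that end I will root $T$ at an arbitrary vertex $r$ and, for each vertex $v\in V(T)$, define a signature $S(v)\in\{0,1\}^{k}$ whose $i$-th coordinate records the parity of the number of edges of color $i$ lying on the unique path $P(r,v)$ from $r$ to $v$. The vertex $r$ itself gets the zero signature. Clearly there are at most $2^{k}$ possible signatures, so it suffices to show that the map $v\mapsto S(v)$ is injective.

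The injectivity should follow immediately from the conflict-free property. For any two distinct vertices $u,v$, let $P(u,v)$ be the unique $u$--$v$ path in $T$. Standard cancellation on the tree gives
\[
S(u)\oplus S(v) \;=\; \bigl(\text{parity vector of colors along }P(u,v)\bigr),
\]
because every edge of the shared initial segment of $P(r,u)$ and $P(r,v)$ contributes to both $S(u)$ and $S(v)$ and therefore cancels in the coordinatewise XOR, while every edge of $P(u,v)$ contributes to exactly one of them. Since the coloring is conflict-free, $P(u,v)$ contains an edge whose color appears exactly once on $P(u,v)$, so the parity vector of colors along $P(u,v)$ has at least one coordinate equal to $1$. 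Hence $S(u)\oplus S(v)\neq\mathbf 0$, i.e.\ $S(u)\neq S(v)$.

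Consequently $n=|V(T)|\le 2^{k}$, so $k\ge\log_{2}n$, and since $k$ is an integer we get $k\ge\lceil\log_{2}n\rceil$. Combined with Lemma~\ref{lem1-2}, which shows $cfc(P_{n})=\lceil\log_{2}n\rceil$ is achievable on the path, this yields the conjectured inequality $cfc(T)\ge cfc(P_{n})=\lceil\log_{2}n\rceil$.

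I do not foresee a real obstacle: the only subtlety is verifying the XOR/cancellation identity, which is immediate because each edge of $T$ lies on at most one of the three paths $P(r,u)\cap P(r,v)$, $P(u,v)\setminus P(r,u)$, $P(u,v)\setminus P(r,v)$ in the appropriate way, so its color contributes to $S(u)\oplus S(v)$ if and only if it lies on $P(u,v)$. Once this is in hand, the conflict-free hypothesis is used in its weakest possible form (\emph{some} color of odd multiplicity exists on each path), and the tree structure (unique paths) is essential only to make $S$ well-defined and the cancellation exact; no induction on $n$ or case analysis on the shape of $T$ is required.
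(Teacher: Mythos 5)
Your proof is correct and is essentially the paper's own argument: the paper also bounds $cfc(T)$ by a parity-vector pigeonhole count (via the intermediate notion of an ``odd connection coloring''), taking the $n-1$ paths from a fixed leaf and showing their parity vectors are distinct and nonzero by exactly the symmetric-difference/XOR cancellation you use, giving $2^{cfc(T)}-1\geq n-1$. Your rooting at an arbitrary vertex and counting $n$ signatures (including the zero one) is only a cosmetic repackaging of the same idea.
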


\begin{df}\label{df1-1}
A tree $T$ is called \emph{$cfc$-critical} if for every proper
subtree $T'$ of $T$, $cfc(T')<cfc(T )$, which means that for
every edge $e$ of $T$, any one of the (two) nontrivial components
in $T-e$ has a conflict-free connection number less than $cfc(T )$.
In addition, we say that a tree $T$ is called \emph{$k$-$cfc$-critical}
if $T$ is $cfc$-critical and $cfc(T )=k$.
\end{df}

Let us give an overview of the results of this paper. In Section $2$,
we present a sharp lower bound for the conflict-free connection
number of trees, this completely confirms Conjecture \ref{conj1-4}.
In Section $3$, we give a sharp upper bound for the conflict-free
connection number of trees by a simple algorithm we develop.
Furthermore, we show that the conflict-free connection number of the
binomial tree with $2^{k-1}$ vertices is $k-1$. In Section $4$, we
study trees which are $cfc$-critical, and prove that if a tree $T$ is
$cfc$-critical, then the conflict-free connection coloring of $T$ is
equivalent to the edge ranking of $T$.

\section{The lower bound}

In order to obtain a lower bound for the conflict-free connection
number of trees, we define a new kind of edge-colorings of graphs,
which is regarded as the generalization of the conflict-free connection
colorings of graphs.

\begin{df}\label{df2-1}
Let $G$ be a nontrivial connected graph with an edge-coloring.
A path in $G$ is called an \emph{odd path} if there is a color
that occurs an odd number of times on the edges of the path.
An edge-colored graph $G$ is called \emph{odd connected}
if any two distinct vertices of $G$ are connected by an odd path,
and this coloring is called an
\emph{odd connection coloring} of $G$.
For a connected graph $G$, the minimum number of colors
that are required in order to make $G$ odd connected
is called the \emph{odd connection number} of $G$,
denoted by \emph{$oc(G)$}.
\end{df}

It is easy to see that every conflict-free connection coloring of $G$
is an odd connection coloring, which implies the following easy result.

\begin{pro}\label{pro2-2}
Let $G$ be a connected graph, then $oc(G)\leq cfc(G)$.
\end{pro}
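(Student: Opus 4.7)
The plan is to exhibit, from any optimal conflict-free connection coloring of $G$, a coloring with the same number of colors that witnesses odd connectedness. This then forces $oc(G)\leq cfc(G)$ by the minimality in the definition of $oc(G)$.

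More concretely, I would start by fixing an optimal conflict-free connection coloring $c$ of $G$ using exactly $cfc(G)$ colors. For any two distinct vertices $u,v\in V(G)$, the definition of conflict-free connection guarantees a $u$--$v$ path $P$ and a color $\alpha$ that appears on exactly one edge of $P$. Since $1$ is odd, the color $\alpha$ appears an odd number of times on $P$, so $P$ is already an odd path in the sense of Definition~2.1. Hence the same coloring $c$ makes $G$ odd connected.

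It then follows immediately that $oc(G)\leq cfc(G)$, since $oc(G)$ is the minimum number of colors in \emph{any} odd connection coloring of $G$, and we have just exhibited one using $cfc(G)$ colors.

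There is essentially no obstacle here: the argument is a one-line observation that ``unique'' implies ``odd,'' so the work is just to state the inclusion of coloring classes carefully and invoke the definitions. The only thing to be slightly careful about is to note that no recoloring is needed — the very same coloring $c$ that certifies conflict-free connection also certifies odd connection — which is why the inequality is $\leq$ rather than anything more elaborate.
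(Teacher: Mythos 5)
Your proposal is correct and is exactly the paper's (implicit) argument: every conflict-free connection coloring is an odd connection coloring, since a color appearing exactly once on a path appears an odd number of times, and the inequality follows from the minimality of $oc(G)$. Nothing further is needed.
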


The parity vector below is a very useful tool to study the
odd connection coloring of graphs.

\begin{df}\label{df2-3}
Given an edge-coloring $c:E\rightarrow \{1,\cdots,k\}$ of $G$
and a path $P$ of $G$, the parity vector of $P$ is an element
of $\{0,1\}^k$ in which the $i$-th coordinate equals the parity
($0$ for even, or $1$ for odd) of the number of edges in $P$
with color $i$.
\end{df}

It is clear that an edge-colored graph $G$ is odd connected
if and only if any two distinct vertices of $G$ are connected by
a path whose parity vector is not the all-zero vector. Now we are
ready to give the proof of our main result of this section. The idea
in the following lemma was also used in \cite{CKP} for the
vertex-coloring case and we show here how to transfer this idea to the
edge-coloring.

\begin{lem}\label{lem2-4}
Let $T$ be a tree of order $n$. Then $oc(T)\geq \lceil \log_{2} n\rceil $.
\end{lem}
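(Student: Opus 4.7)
The plan is to exploit the uniqueness of paths in a tree together with a parity vector argument indexed from a chosen root. Fix an arbitrary root $r$ of $T$, and suppose $T$ has an odd connection coloring with $k$ colors. For each vertex $v \in V(T)$, let $\vec{p}(v) \in \{0,1\}^k$ denote the parity vector of the unique path from $r$ to $v$, with the convention $\vec{p}(r) = \vec{0}$.

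The central observation is that for any two distinct vertices $u, v$, the parity vector of the unique $u$-$v$ path equals $\vec{p}(u) \oplus \vec{p}(v)$ (coordinate-wise XOR). This is because, letting $w$ be the least common ancestor of $u$ and $v$ in the rooted tree, the $r$-$u$ path and the $r$-$v$ path share exactly the edges of the $r$-$w$ subpath, so in the sum (modulo $2$) of the two parity vectors these shared edges cancel and what remains is precisely the edge set of the $u$-$v$ path. I would verify this identity carefully by splitting each path into the $r$-to-$w$ portion and the $w$-to-endpoint portion.

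Once this identity is in hand, the definition of odd connection requires that for every pair of distinct vertices $u, v$ the $u$-$v$ path has parity vector different from $\vec{0}$, i.e.\ $\vec{p}(u) \neq \vec{p}(v)$. Hence the map $v \mapsto \vec{p}(v)$ is an injection from $V(T)$ into $\{0,1\}^k$, which forces $2^k \geq n$, and therefore $k \geq \lceil \log_2 n \rceil$. Combining with Proposition \ref{pro2-2}, this gives the claimed lower bound on $oc(T)$ and a fortiori on $cfc(T)$.

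The only delicate step is the parity identity $\vec{p}(u) \oplus \vec{p}(v) = $ parity vector of the $u$-$v$ path; once that is established the bound is immediate by a pigeonhole argument. I do not expect any real obstacle, since the tree structure makes the symmetric difference of the two root-paths coincide exactly with the $u$-$v$ path.
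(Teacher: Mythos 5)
Your proof is correct and follows essentially the same route as the paper: the paper fixes a leaf, takes the $n-1$ paths to the other vertices, and shows their parity vectors are distinct and nonzero via the symmetric-difference (XOR) argument, giving $2^{oc(T)}-1\geq n-1$; your version merely phrases this as an injection $v\mapsto\vec{p}(v)$ of all $n$ vertices (root included, with $\vec{p}(r)=\vec{0}$) into $\{0,1\}^k$, which yields the same bound $2^k\geq n$.
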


\begin{proof}
Let $c$ be an odd connection coloring of $T$ with $oc(T)$ colors.
Take $n-1$ distinct paths each starting from a fixed leaf vertex of $T$ to
the other $n-1$ vertices. First, we claim that any two of these $n-1$ paths
have distinct parity vectors. Assume, to the contrary, two of them,
say $P_1$ and $P_2$, have the same parity vector $\mathbf{a}$. Let $E'$
be the symmetric difference of $E(P_1)$ and $E(P_2)$. Then the subgraph
induced by $E'$ is also a path $P'$. Notice that the parity vector
$\mathbf{a'}$ of $P'$ is $\mathbf{a}+\mathbf{a}=\mathbf{0}$, as the colors
in $E(P_1) \cap E(P_2)$ are counted twice, which cannot change the value
of $\mathbf{a'}$. So, on the path $P'$ each color appears an even number of
times, a contradiction. Thus, there are $n-1$ distinct parity vectors,
none of which is the all-zero vector. On the other hand, the number of
non-zero parity vectors is at most $2^{oc(T)}-1$, which implies that
$2^{oc(T)}-1\geq n-1$, and hence $oc(T)\geq \lceil \log_{2} n\rceil $.
\end{proof}

The following result is an immediate consequence of Lemmas
\ref{lem1-2} and \ref{lem2-4} and Proposition \ref{pro2-2}, which
completely confirms Conjecture \ref{conj1-4}.

\begin{thm}\label{thm2-5}
Let $T$ be a tree of order $n$.
Then $cfc(T)\geq cfc(P_n)= \lceil \log_{2} n\rceil $.
\end{thm}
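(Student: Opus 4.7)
The plan is to obtain Theorem \ref{thm2-5} as a short composition of the three results just established: Lemma \ref{lem1-2}, Proposition \ref{pro2-2}, and Lemma \ref{lem2-4}. Concretely, I would first invoke Proposition \ref{pro2-2} to pass from the conflict-free connection number to the odd connection number, noting that an edge color used exactly once on a path is used an odd number of times on it, so any conflict-free connection coloring of $T$ is in particular an odd connection coloring. This gives $cfc(T)\geq oc(T)$.

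Next I would apply Lemma \ref{lem2-4}, which provides $oc(T)\geq \lceil \log_2 n\rceil$ using the parity-vector argument. Chaining the two inequalities yields the lower bound $cfc(T)\geq \lceil \log_2 n\rceil$. Finally, Lemma \ref{lem1-2} supplies the matching value $cfc(P_n)=\lceil \log_2 n\rceil$, which both justifies the equality in the statement and certifies sharpness of the bound (the path $P_n$ is the extremal tree on $n$ vertices).

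There is no genuine obstacle at this stage, since the nontrivial content—distinctness of the parity vectors of the $n-1$ leaf-rooted paths and the counting argument $2^{oc(T)}-1\geq n-1$—has been absorbed into Lemma \ref{lem2-4}. The only point deserving care is to make the reduction to $oc(T)$ explicit, so that the reader sees why a parity-vector bound suffices for the ostensibly stronger conflict-free parameter; beyond that, the proof is a one-line concatenation of the cited results and deserves to be stated as such.
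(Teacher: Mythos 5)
Your proposal is correct and follows exactly the paper's route: the theorem is obtained by chaining Proposition \ref{pro2-2} ($cfc(T)\geq oc(T)$) with Lemma \ref{lem2-4} ($oc(T)\geq \lceil \log_{2} n\rceil$) and quoting Lemma \ref{lem1-2} for $cfc(P_n)=\lceil \log_{2} n\rceil$. Your only addition, making explicit why a conflict-free connection coloring is an odd connection coloring, is a reasonable clarification but not a different argument.
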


\section{An algorithm for the upper bound}

At the very beginning of this section, we give the following
concept.

\begin{df}\label{df3-0}
Let $T$ be a tree. An edge $e$ of $T$ is called \emph{balanced}
if the difference of the sizes of two resulting subtrees
of $T-e$ is minimum among the edges of $G$.
\end{df}

We below present an algorithm for constructing a conflict-free connection
coloring of a given tree. This algorithm starts from the single connected
component $T$, and in each iteration, removes one balanced edge from each
generated subtree of $T$ to split it into two generated subtrees, until all
the generated subtrees become singletons as illustrated in Fig. 1.

For convenience, the \emph{depth} of a tree $T$, denoted by $d(T)$, is
defined as the number of the iterations of the following algorithm;
the \emph{depth} of an edge $e$ in $T$, denoted by $d(e)$, is defined
as the sequence number of the iteration that deletes $e$. That is,
the edges deleted in $d$-th iteration are the ones with depth $d$.

\begin{table}[!htb]
\begin{tabular}{lll}
\hline
{\textbf{Algorithm $1$ for conflict-free connection coloring of a tree}}\\
\hline
\textbf{Input:} A tree $T=(V,E)$.\\

\textbf{Output:} A conflict-free connection coloring $c$ of $T$.\\

\textbf{Step 1:} Initialization: set $F=T$, $c:E\rightarrow\{0\}$.\\
\textbf{Step 2:} Determine whether there exists a component which has more than one vertex \\
in $F$. If so, go to Step 3; otherwise, go to Step 4.\\
\textbf{Step 3:} Choose all the components which have more than one vertex, and then delete \\
a balanced edge $e$ from each of such components to get new components which form
a \\
forest $F'$. Replace $F$ by $F'$. Update $c$: color edges $e$ with $d(e)$. Go to Step 2.\\
\textbf{Step 4:} Return $c$. \\
\hline
\end{tabular}
\end{table}

\begin{figure}[!htb]
\centering
\includegraphics[width=0.4\textwidth]{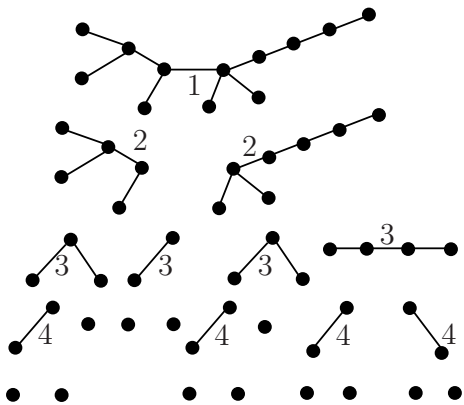}
\caption{A tree of depth $4$ }
\end{figure}

\begin{lem}\label{lem3-0}
Let $T$ be a tree of order $n$. Then
$\max \{\Delta(T),\lceil \log_{2} n\rceil \}\leq d(T)\leq n-1 $.
\end{lem}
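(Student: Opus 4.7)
The plan is to verify the three bounds $d(T)\leq n-1$, $d(T)\geq \Delta(T)$, and $d(T)\geq \lceil\log_{2}n\rceil$ separately, each by tracking a simple combinatorial quantity across the iterations of Algorithm~1.

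For the upper bound $d(T)\leq n-1$, I would simply count edges. Since $T$ has exactly $n-1$ edges and Step~3 removes at least one balanced edge in every iteration it is executed (at least one from each non-singleton component), the algorithm cannot run for more than $n-1$ iterations.

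For the lower bound $d(T)\geq \Delta(T)$, I would fix a vertex $v$ of maximum degree. At the start of every iteration, $v$ lies in a unique component of the current forest $F$, and Step~3 removes exactly one edge from that component. Hence at most one edge incident to $v$ can be deleted per iteration, and since all $\Delta(T)$ edges incident to $v$ must be deleted before $\{v\}$ becomes a singleton, the algorithm must run for at least $\Delta(T)$ iterations.

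For the logarithmic lower bound $d(T)\geq \lceil\log_{2}n\rceil$, the cleanest route is to count components. Let $c_i$ be the number of components of $F$ after $i$ iterations, so that $c_0=1$ and $c_{d(T)}=n$. In every iteration each non-singleton component splits into exactly two components, while singletons persist, so $c_{i+1}\leq 2c_i$. By induction $c_i\leq 2^i$, and taking $i=d(T)$ yields $n\leq 2^{d(T)}$; since $d(T)$ is an integer, $d(T)\geq \lceil\log_{2}n\rceil$ follows. None of these three estimates should present a real obstacle; the most delicate point is in this last argument, where the factor of $2$ in $c_{i+1}\leq 2c_i$ depends on Step~3 deleting exactly one edge per non-singleton component (so that each such component splits into exactly two, not more).
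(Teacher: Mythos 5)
Your proof is correct and follows essentially the same route as the paper: the edge count for $d(T)\leq n-1$, the observation that only one edge per component (hence at most one edge at $v$) is deleted per iteration for $d(T)\geq\Delta(T)$, and a binary-splitting count for the logarithmic bound. The only cosmetic difference is that you bound the number of components after $i$ iterations by $2^i$, whereas the paper equivalently bounds the number of edges of depth $i$ by $2^{i-1}$ and sums; both yield $n-1\leq 2^{d(T)}-1$.
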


\begin{proof} It is obvious that $d(T)\leq n-1 $. And since we
remove only one balanced edge from a generated subtree of $T$ in
Algorithm $1$, it follows that edges in the same depth are not
adjacent, which means $d(T)\geq \Delta(T) $. Thus, we only need
to prove $ d(T)\geq \lceil \log_{2} n\rceil $. Note that the number
of edges with depth $i$ is at most $2^{i-1}$. Algorithm $1$ is
not terminated until all the generated subtrees become singletons.
It follows that $m(T)\leq 2^0+2^1+\cdots+2^{d(T)-1}=2^{d(T)}-1$,
which implies $d(T)\geq \lceil \log_{2} n\rceil $.
\end{proof}

Note that the choice of the balanced edge of each generated subtree
is unique for $K_{1,n-1}$ and $S_{a,n-a}$ by symmetry in Algorithm $1$,
and it is easy to check that $d(K_{1,n-1})=n-1$ and
$d(S_{a,n-a})=\Delta(S_{a,n-a})$. Next, we study the depths of other
trees. Obviously, the path $P_n$ has the same property as $K_{1,n-1}$
and $S_{a,n-a}$.

\begin{lem}\label{lem3-1}
Let $P_n$ be a path on $n$ vertices. Then
$d(P_n)= \lceil \log_{2} n\rceil $.
\end{lem}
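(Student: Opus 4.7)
The plan is to combine the lower bound already available from Lemma~\ref{lem3-0} with a matching upper bound proved by strong induction on $n$. Since Lemma~\ref{lem3-0} gives $d(P_n)\geq \lceil\log_2 n\rceil$ for free, the entire work consists of establishing $d(P_n)\leq \lceil\log_2 n\rceil$.

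The key observation is to identify the balanced edge of $P_n$ explicitly. Label the edges $e_1,\ldots,e_{n-1}$ along the path; removing $e_k$ splits $P_n$ into $P_k$ and $P_{n-k}$, whose sizes (in edges) are $k-1$ and $n-k-1$, so the size difference is $|2k-n|$. This quantity is minimized precisely when $k=\lfloor n/2\rfloor$ or $k=\lceil n/2\rceil$, so any balanced edge of $P_n$ splits it into $P_{\lceil n/2\rceil}$ and $P_{\lfloor n/2\rfloor}$. Since Algorithm~$1$ processes all components in the same iteration in parallel, and since $\lceil n/2\rceil\geq \lfloor n/2\rfloor$ implies (by the inductive hypothesis) $d(P_{\lceil n/2\rceil})\geq d(P_{\lfloor n/2\rfloor})$, we obtain the clean recurrence
\[
d(P_n)=1+d(P_{\lceil n/2\rceil}).
\]

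I would then run strong induction with the base cases $d(P_1)=0=\lceil\log_2 1\rceil$ and $d(P_2)=1=\lceil\log_2 2\rceil$. For the inductive step with $n\geq 3$, the hypothesis gives $d(P_{\lceil n/2\rceil})=\lceil\log_2\lceil n/2\rceil\rceil$, so it remains to verify the elementary identity
\[
1+\lceil\log_2\lceil n/2\rceil\rceil=\lceil\log_2 n\rceil \qquad (n\geq 2),
\]
which follows by writing $n$ in the form $2^{s-1}<n\leq 2^{s}$ with $s=\lceil\log_2 n\rceil$ and noting that then $2^{s-2}<\lceil n/2\rceil\leq 2^{s-1}$, so $\lceil\log_2\lceil n/2\rceil\rceil=s-1$.

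The only mildly delicate point is this ceiling identity and making sure that when $n$ is odd the two resulting subpaths $P_{(n+1)/2}$ and $P_{(n-1)/2}$ are handled consistently (the larger one governs the depth). Everything else is routine bookkeeping, so I do not expect any real obstacle beyond confirming that the recurrence indeed uses $\lceil n/2\rceil$ rather than $\lfloor n/2\rfloor$ on the right-hand side.
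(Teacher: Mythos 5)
Your proposal is correct and follows essentially the same route as the paper: the lower bound is quoted from Lemma~\ref{lem3-0}, and the upper bound is proved by induction after observing that a balanced (central) edge splits $P_n$ into subpaths on at most $\lceil n/2\rceil$ vertices, together with the ceiling identity $1+\lceil\log_2\lceil n/2\rceil\rceil=\lceil\log_2 n\rceil$. Your write-up is in fact a bit more careful than the paper's (it identifies the balanced edges explicitly and verifies the ceiling arithmetic), but the argument is the same; note only that for the upper bound the inequality $d(P_n)\leq 1+\max\{d(P_{\lceil n/2\rceil}),d(P_{\lfloor n/2\rfloor})\}$ suffices, so the equality form of your recurrence is not needed.
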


\begin{proof} By Lemma \ref{lem3-0}, we have
$d(P_n)\geq \lceil \log_{2} n\rceil $.
Thus, it remains to verify the matching lower bound.
We use induction on $n$.
The statement is evidently true for $n=1$ and $n=2$.
Let $P_n$ be a path on $n$ vertices.
Then we delete a central edge from $P_n$.
The resulting paths $P$ and $P'$ have
at most $\lceil \frac{n}{2}\rceil$ vertices. Therefore,
by the induction hypothesis,
$\max\{d(P), d(P')\}\leq \lceil \log_{2} \frac{n}{2}\rceil $.
Thus, $d(P_n)\leq 1+\max\{d(P),d(P')\}\leq 1+\lceil \log_{2} \frac{n}{2}
\rceil \leq \lceil \log_{2} n\rceil $.
\end{proof}

The (rooted) binomial tree $B_k$ with $2^{k-1}$ vertices is defined
as follows: $B_1$ is a single vertex; for $k>1$, $B_k$ consists of
two disjoint copies of $B_{k-1}$ and an edge between their
two roots, where the root of $B_k$ is the root of the first copy.
These trees are used in \cite{BBJK,GS}. The binomial tree $B_k$ is
another tree class for which the choice of the balanced edge of
each generated subtree is unique in Algorithm $1$.

\begin{lem}\label{lem3-3}
Let $B_k$ be the binomial tree with $2^{k-1}$ vertices for $k\geq 2$.
Then $d(B_k)=k-1$.
\end{lem}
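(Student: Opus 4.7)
I would establish $d(B_k)=k-1$ by matching a lower bound from Lemma \ref{lem3-0} with an upper bound proved by induction on $k$, the key structural input being that the edge joining the two roots of $B_k$ is the unique balanced edge.

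The lower bound is immediate: since $B_k$ has $n = 2^{k-1}$ vertices, Lemma \ref{lem3-0} gives $d(B_k) \geq \lceil \log_2 2^{k-1} \rceil = k-1$. For the matching upper bound, I would induct on $k$. The base case $k=2$ is trivial, as $B_2$ is a single edge and $d(B_2)=1$. For the inductive step, the crucial claim is that the edge $e^{*}$ connecting the roots of the two copies of $B_{k-1}$ is the unique balanced edge of $B_k$: removing $e^{*}$ yields two components each of size $2^{k-2}$, giving size difference $0$; while any other edge $e'$ lies inside one of the two $B_{k-1}$ copies, and its removal detaches a proper subtree of that copy not containing its root, whose size lies between $1$ and $2^{k-2}-1$, producing a size difference of at least $2$. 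Hence Algorithm $1$ must delete $e^{*}$ in its first iteration, leaving two copies of $B_{k-1}$ that are subsequently processed in parallel. By the inductive hypothesis each of these has depth $k-2$, so $d(B_k)=1+(k-2)=k-1$.

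The main obstacle is the uniqueness-of-balanced-edge argument: one must verify that the recursive construction of $B_k$ prevents any edge other than $e^{*}$ from producing an equal split. Once $e^{*}$ is excluded, every remaining edge is internal to a single $B_{k-1}$ subtree of total size only $2^{k-2}$, which strictly bounds the size of the separated component below $2^{k-2}$ and forces the balance difference away from $0$. Once this structural observation is in hand, the induction runs cleanly.
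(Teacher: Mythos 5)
Your proof is correct and follows essentially the same route as the paper: the lower bound from Lemma \ref{lem3-0} and an induction on $k$ in which the edge joining the two roots is deleted first, giving $d(B_k)\leq 1+d(B_{k-1})$. Your explicit verification that this root-joining edge is the \emph{unique} balanced edge is a detail the paper only asserts in the text preceding the lemma, so your write-up is, if anything, slightly more complete.
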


\begin{proof} By Lemma \ref{lem3-0}, we have
$d(B_k)\geq \lceil \log_{2} n\rceil=k-1$.
For the converse, we use induction on $k$.
Noticing that $B_2$ is a path of order $2$,
it follows that the result holds trivially for $k=2$.
Suppose that the result holds for $B_{k-1}$ for $k\geq 3$.
Let $B_k$ be the binomial tree with $2^{k-1}$ vertices, it follows
that $B_k$ consists of two disjoint copies of $B_{k-1}$ and
an edge $e_0$ between their two roots. We delete the edge $e_0$
from $B_k$. Therefore, by the induction hypothesis,
$d(B_{k-1}) \leq k-2$. Thus, $d(B_k)\leq 1+d(B_{k-1})\leq k-1 $.
\end{proof}

The correctness of Algorithm $1$ is confirmed by the following
theorem, and this gives a sharp upper bound for the conflict-free
connection number of trees. Note that for general trees, the
choice of the balanced edge of each generated subtree may not
be unique in Algorithm $1$, which implies that there exist many
priorities of removing edges of $T$.

\begin{thm}\label{thm3-4}
Algorithm $1$ constructs a conflict-free connection coloring of a
given tree $T$. Moreover, $cfc(T)\leq \min \{d(T)|$ $d(T)$ is the
depth of $T$ by a certain priority of removal edges of $T$ in Algorithm
$1$$\}$, where all possible priorities of removing edges of $T$ are taken.
\end{thm}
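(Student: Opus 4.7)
The plan is to show that every path $P$ between two distinct vertices of $T$ receives a color that appears exactly once on $P$, namely the color at the minimum-depth edge of $P$. First I would observe that Algorithm $1$ assigns each edge $e$ the color $d(e)$, and therefore uses precisely the colors $\{1,2,\dots,d(T)\}$, for a total of $d(T)$ colors. Hence it suffices to verify that the output is a conflict-free connection coloring.

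Fix distinct vertices $u,v$ of $T$ and let $P$ be the unique $u$-$v$ path. Let $d^{*}=\min\{d(e):e\in E(P)\}$ and let $e^{*}\in E(P)$ realize this minimum. The key invariant to establish is that at the start of iteration $d^{*}$, the whole path $P$ is contained in a single component of the current forest $F$. This follows by a short induction on $i=1,\dots,d^{*}-1$: by the choice of $d^{*}$, no edge of $P$ has depth less than $d^{*}$, so no edge of $P$ is deleted in any of the first $d^{*}-1$ iterations, and $P$ remains connected inside whichever component of $F$ contains it.

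Now at iteration $d^{*}$, Step $3$ removes exactly one balanced edge from each multi-vertex component of $F$. In particular, only a single edge is deleted from the component containing $P$, and this edge must be an edge of $P$ assigned color $d^{*}$. Any other edge of $P$ with depth $d^{*}$ would force a second deletion from the same component during the same iteration, which Algorithm $1$ does not allow. Therefore $e^{*}$ is the unique edge of $P$ colored $d^{*}$, so color $d^{*}$ occurs exactly once on $P$ and $P$ is a conflict-free path. This establishes correctness, giving $cfc(T)\leq d(T)$ for the particular priority used in Step $3$; taking the minimum over all admissible priorities of removing edges yields the bound stated in the theorem.

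The main obstacle is conceptual rather than computational: one must pick the correct edge of $P$ to play the role of the uniquely-colored edge. The natural first guess is the maximum-depth edge of $P$, but two such edges can sit in different components of the forest at the time of their common deletion iteration, so uniqueness need not hold at the maximum. Choosing instead the minimum-depth edge turns the single-removal rule of Algorithm $1$ into exactly the tool needed to force uniqueness, and the rest of the argument reduces to a bookkeeping induction.
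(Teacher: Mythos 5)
Your proof is correct and follows essentially the same route as the paper: both arguments single out the minimum-depth edge on the $u$-$v$ path (in the paper, the balanced edge of the last generated subtree containing both $u$ and $v$) and use the fact that only one edge is deleted per component per iteration to conclude that this minimal color appears exactly once on the path. Your write-up just spells out the bookkeeping (no edge of $P$ deleted before iteration $d^{*}$, hence $P$ lies in one component) that the paper leaves implicit.
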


\begin{proof}
It is sufficient to show that there exists a conflict-free path for each
pair of distinct vertices $u,v$ of $T$ under the coloring given by
Algorithm $1$. Since adjacent edges are colored with distinct colors in
Algorithm $1$, we may assume that $u$ and $v$ are not adjacent.
Let $e_0$ be the balanced edge of the first generated subtree that
simultaneously contains $u$ and $v$ starting from $u$ and $v$, respectively.
Since the color of $e_0$ on the path between $u$ and $v$ is minimal and unique,
it follows that the path between $u$ and $v$ is conflict-free.
Thus, this is a conflict-free connection coloring of $T$,
which implies that the result holds.
\end{proof}

\noindent\textbf{Remark:} Algorithm $1$ provides an optimal
conflict-free connection coloring for the path, the star,
the tree with diameter $3$, and the binomial tree which is
proved below. These imply that the upper bound in Theorem
\ref{thm3-4} is sharp for some classes of graphs.

Let $D(T)=\min \{d(T)|$ $d(T)$ is the depth of $T$ by a certain
priority of removing edges of $T$ in Algorithm $1$$\}$, where all
possible priorities of removing edges of $T$ are taken.
Let $A_k$ be the graph obtained from $K_{1,k}$ and $P_{2^{k-1}}$ by
identifying a leaf vertex in $K_{1,k}$ with an end vertex in $P_{2^{k-1}}$ for sufficiently large $k$. It is not hard to see that $cfc(A_k)=k$, but $k<D(A_k)<2k$. For general trees, we propose the following conjecture.

\begin{conjecture}\label{conj3-7}
For a given tree $T$, $D(T)< 2cfc(T)$.
\end{conjecture}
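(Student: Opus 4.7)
The plan is to attempt induction on $n=|V(T)|$, aiming at the sharper bound $D(T)\leq 2\,cfc(T)-1$. Let $T$ have order $n\geq 3$, set $k=cfc(T)$, and pick any balanced edge $e$, writing $T_1,T_2$ for its two components. Restricting an optimal $cfc$-coloring of $T$ to a subtree gives a $cfc$-coloring of that subtree (every path in the subtree is a path in $T$), so $cfc(T_i)\leq k$, and the inductive hypothesis gives $D(T_i)\leq 2\,cfc(T_i)-1$. If in fact $\max\{cfc(T_1),cfc(T_2)\}\leq k-1$, then $D(T_i)\leq 2k-3$, whence $D(T)\leq 1+(2k-3)=2k-2<2k$, and we are done. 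Thus all the work concentrates on the case $cfc(T_2)=k$, where the naive bound only yields $D(T)\leq 1+(2k-1)=2k$, just missing the strict inequality.

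To break the hard case I would try a two-step look-ahead: within $T_2$, pick a balanced edge $e'$ of $T_2$ and try to argue that after two iterations of Algorithm~$1$ all (at most four) resulting components have $cfc\leq k-1$; induction then yields $D(T)\leq 2+(2k-3)=2k-1$. The substantive structural claim to establish is: whenever a balanced edge of $T$ fails to drop $cfc$, the obstruction must lie in a specific part of $T$ (either a vertex of degree $\Delta(T)=k$ or an embedded long path of length roughly $2^{k-1}$), and the next balanced edge can be chosen to attack that obstruction. A dichotomy on $cfc(T)\geq \max\{\lceil\log_2 n\rceil,\Delta(T)\}$ is natural here: if $k=\Delta(T)$ one focuses on balanced edges incident to the maximum-degree vertex, while if $k=\lceil \log_2 n\rceil$ the centroid-like choice essentially halves $n$ and logarithmic accounting closes the gap. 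An alternative route I would pursue in parallel is to prove the stronger auxiliary inequality $D(T)\leq \lceil\log_2 n\rceil+\Delta(T)-1$; this immediately implies the conjecture, since $cfc(T)\geq \lceil\log_2 n\rceil$ by Theorem~\ref{thm2-5} and $cfc(T)\geq \Delta(T)$ because a $cfc$-coloring of a tree is a proper edge-coloring.

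The main obstacle is precisely the mismatch between balanced edges, a global size invariant, and $cfc$, a global coloring invariant: a balanced edge need not interact well with an optimal coloring, so no single iteration of Algorithm~$1$ is guaranteed to drop $cfc$. The example $A_k$ of a star $K_{1,k}$ attached to a pendant path of length $2^{k-1}$ is the prototypical tight case: its balanced edge is deep inside the path, leaving the star $K_{1,k}$ untouched for many iterations, and only after the path has been chopped down do we pay the $k$ additional iterations required by the star. Any successful argument must extract exactly one unit of slack on such examples in order to preserve the strict inequality, which is why a multi-iteration $\epsilon$-saving argument, rather than the crude one-step bound $D(T)\leq 1+\max_i D(T_i)$, appears unavoidable.
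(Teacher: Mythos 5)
This statement is Conjecture \ref{conj3-7}: the paper offers no proof of it (it is posed as an open problem, supported only by the example $A_k$), so there is no ``paper proof'' to compare against. Your submission is accordingly judged as a proof attempt on its own, and it is not a proof but a plan with the decisive step missing. The induction works smoothly only in the easy case where deleting a balanced edge drops $cfc$ in both components; in the hard case $cfc(T_2)=cfc(T)=k$ you only assert a ``substantive structural claim'' (that the obstruction is a vertex of degree $k$ or an embedded path of length about $2^{k-1}$, and that the next balanced edge can be chosen to attack it) without proving it, and indeed the balanced-edge rule gives you no such freedom: which edge may be deleted is dictated purely by component sizes, not by degrees or by any optimal coloring, so in examples like $A_k$ every admissible edge for many consecutive iterations lies deep in the pendant path and never touches the high-degree vertex. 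This is exactly the mismatch you name yourself in the last paragraph, so the argument never closes; the two-step look-ahead is not shown to succeed even on $A_k$.

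The fallback route via the auxiliary inequality $D(T)\leq \lceil\log_2 n\rceil+\Delta(T)-1$ would indeed imply the conjecture (since $cfc(T)\geq\max\{\lceil\log_2 n\rceil,\Delta(T)\}$ for trees), but it is also left entirely unproven, and it is of comparable difficulty: there is no mechanism in Algorithm $1$ guaranteeing that a given iteration decreases either the logarithmic budget or the degree budget, so proving it again requires an amortized, multi-iteration analysis of where balanced edges can sit relative to high-degree vertices and long paths. The parts of your write-up that are correct (restriction of a $cfc$-coloring of a tree to a subtree is a $cfc$-coloring, hence $cfc(T_i)\leq cfc(T)$; the recursion $D(T)\leq 1+\max_i D(T_i)$ over an admissible first deletion) are fine but only reproduce the crude bound $D(T)\leq 2\,cfc(T)$ in the hard case, one unit short of the strict inequality. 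As it stands, the conjecture remains open and your proposal does not settle it.
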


Next, we determine the conflict-free connection number of the
binomial tree. Combining Theorems \ref{thm2-5} and \ref{thm3-4}
and Lemma \ref{lem3-3}, we get the following conclusion.

\begin{thm}\label{thm3-5}
Let $B_k$ be the binomial tree with $2^{k-1}$ vertices,
then $cfc(B_k)=k-1$.
\end{thm}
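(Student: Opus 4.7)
The plan is to prove $cfc(B_k) = k-1$ by sandwiching the value between matching lower and upper bounds already established in the paper. The binomial tree $B_k$ has $n = 2^{k-1}$ vertices, a fact I will use throughout.

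For the lower bound, I will invoke Theorem \ref{thm2-5}, which gives $cfc(T) \geq \lceil \log_2 n \rceil$ for every tree $T$ of order $n$. Substituting $n = 2^{k-1}$ yields $cfc(B_k) \geq \lceil \log_2 2^{k-1} \rceil = k-1$. This part is immediate and requires no additional work beyond observing the vertex count.

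For the upper bound, I will chain two facts. Theorem \ref{thm3-4} shows that Algorithm $1$ produces a conflict-free connection coloring, so $cfc(T) \leq D(T) \leq d(T)$ for any priority of removing balanced edges; in particular, $cfc(B_k) \leq d(B_k)$. Then Lemma \ref{lem3-3} gives $d(B_k) = k-1$. Combining the two inequalities yields $cfc(B_k) \leq k-1$.

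Putting the two bounds together, $cfc(B_k) = k-1$, which is exactly the claim. There is no real obstacle here: all the technical content (the lower bound via parity vectors, the correctness of Algorithm $1$, and the computation of $d(B_k)$ by induction on the recursive structure of the binomial tree) has already been handled in the earlier results, so the proof is essentially a one-line citation of Theorems \ref{thm2-5} and \ref{thm3-4} together with Lemma \ref{lem3-3}.
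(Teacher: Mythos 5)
Your proposal is correct and follows exactly the paper's route: the paper also obtains $cfc(B_k)\geq k-1$ from Theorem \ref{thm2-5} with $n=2^{k-1}$ and $cfc(B_k)\leq d(B_k)=k-1$ from Theorem \ref{thm3-4} together with Lemma \ref{lem3-3}. Nothing is missing.
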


Recall that the complete binary tree $B_k^*$ has $k$ levels
and $2^k-1$ vertices. It is not difficult to prove by induction
that $B_k^*\subseteq B_{2k-1}$, and so we get the following corollary.

\begin{cor}\label{cor3-6}
Let $B_k^*$ be the complete binary tree with $k$ levels.
Then $k \leq cfc(B_k^*)\leq 2k-2$.
\end{cor}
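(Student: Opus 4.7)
The plan is to prove the two inequalities separately. The lower bound $k \leq cfc(B_k^*)$ follows immediately from Theorem \ref{thm2-5}: since $B_k^*$ has $n = 2^k - 1$ vertices, we obtain $cfc(B_k^*) \geq \lceil \log_2(2^k-1)\rceil = k$ for $k \geq 2$, using $2^{k-1} < 2^k - 1 < 2^k$.

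For the upper bound, the core is to verify the containment $B_k^* \subseteq B_{2k-1}$ mentioned just before the corollary. Once this is in place, observe that for any subtree $T'$ of a tree $T$, the unique $uv$-path in $T'$ (for $u,v \in V(T')$) coincides with the unique $uv$-path in $T$; consequently, restricting any conflict-free connection coloring of $T$ to $E(T')$ yields a conflict-free connection coloring of $T'$. Hence $cfc(B_k^*) \leq cfc(B_{2k-1})$, and by Theorem \ref{thm3-5} the right-hand side equals $2k-2$.

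To prove $B_k^* \subseteq B_{2k-1}$, I would induct on $k$, actually establishing the stronger statement that there is an embedding of $B_k^*$ into $B_{2k-1}$ sending the root of $B_k^*$ to the root of $B_{2k-1}$. The base case $k=1$ is trivial since both trees are single vertices. For the inductive step, I would first record by a quick auxiliary induction on $m$ the structural fact (immediate from the recursive definition) that the root of $B_m$ has children whose hanging subtrees are isomorphic to $B_1, B_2, \ldots, B_{m-1}$. Applying this to $B_{2k+1}$, two of those hanging subtrees are a $B_{2k-1}$ and a $B_{2k}$. By the inductive hypothesis, one $B_k^*$-child of the root of $B_{k+1}^*$ embeds into the $B_{2k-1}$-subtree with matching roots; since $B_{2k-1}$ is a rooted subtree of $B_{2k}$ (directly from the definition of the binomial tree), the inductive hypothesis also lets us embed the other $B_k^*$-child into the $B_{2k}$-subtree with matching roots. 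These two embeddings are vertex-disjoint, and together with mapping the root of $B_{k+1}^*$ to the root of $B_{2k+1}$, they extend to the desired rooted embedding of $B_{k+1}^*$ in $B_{2k+1}$.

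The main obstacle is the disciplined bookkeeping in the induction: one must track at every step that the embedding respects the roots, since this is what permits the two $B_k^*$-subtrees at the root of $B_{k+1}^*$ to combine cleanly through the root of $B_{2k+1}$. Once the rooted containment is in hand, both bounds fall out directly from Theorems \ref{thm2-5} and \ref{thm3-5}.
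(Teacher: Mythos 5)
Your proposal is correct and follows essentially the same route as the paper: the lower bound via Theorem \ref{thm2-5} applied to the $2^k-1$ vertices of $B_k^*$, and the upper bound by restricting a conflict-free connection coloring of $B_{2k-1}$ to the subtree $B_k^*$ and invoking Theorem \ref{thm3-5}. The only addition is your explicit rooted-embedding induction for $B_k^*\subseteq B_{2k-1}$, which the paper merely asserts as "not difficult to prove by induction," and your argument for it is sound.
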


\begin{proof}
Since $B_k^*$ has $2^k-1$ vertices, it follows that
$cfc(B_k^*)\geq \lceil \log_{2} (2^k-1)\rceil=k$ by
Theorem \ref{thm2-5}. We only need to prove the upper bound.
Since $B_k^*\subseteq B_{2k-1}$, it follows that a conflict-free
connection coloring of $B_{2k-1}$ restricted on the edges of
$B_k^*$ is conflict-free connected. Thus,
$cfc(B_k^*)\leq cfc(B_{2k-1})=2k-2$ by Theorem \ref{thm3-5}.
\end{proof}

\section{$cfc$-critical}

The following theorem indicates that when a tree $T$ is
$cfc$-critical, the conflict-free connection coloring
of $T$ is equivalent to the edge ranking of $T$.

\begin{thm}\label{thm4-0}
Let $T$ be a $cfc$-critical tree, then $cfc(T)=rank(T)$.
\end{thm}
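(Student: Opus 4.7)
The direction $cfc(T)\le rank(T)$ is Lemma~\ref{lem1-4}, so the task is to prove the reverse inequality $rank(T)\le cfc(T)$ when $T$ is $cfc$-critical. I would proceed by strong induction on $n=|V(T)|$, with the trivial base case $n=2$ where $T=K_2$ satisfies $cfc(T)=rank(T)=1$. For the inductive step, let $T$ be $cfc$-critical with $cfc(T)=k\ge 2$, and fix an optimal conflict-free connection coloring $c$ of $T$ using $k$ colors. The strategy is to select an edge $e_0\in E(T)$ whose color under $c$ appears on no other edge of $T$; this edge will carry the top label of an edge ranking. After relabeling so that $c(e_0)=k$, the two components $T_1,T_2$ of $T-e_0$ inherit from $c$ a valid cfc coloring that uses only colors in $\{1,\dots,k-1\}$, so in particular $cfc(T_i)\le k-1$. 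Applying the same construction recursively to each $T_i$ with its restricted coloring produces an edge ranking of $T_i$ on labels $\{1,\dots,k-1\}$; combining these with $e_0$ labeled $k$ gives an edge ranking of $T$ with $k$ labels, since any two same-labeled edges lying in one component are separated by a higher-labeled edge coming from that subtree's ranking, while same-labeled edges in different components are separated by $e_0$ carrying the dominating label $k$. This yields $rank(T)\le k=cfc(T)$ and closes the induction.

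The main obstacle is to establish the existence of such a uniquely-colored edge $e_0$ at every stage of the recursion. At the outermost stage, the intended argument combines three ingredients: the parity-vector technique of Lemma~\ref{lem2-4}, which constrains the number of edges by $n-1\le 2^k-1$; the cfc condition applied along a longest path in $T$, which forces some color to appear exactly once on that path; and the structural rigidity of $cfc$-criticality, which makes every proper subtree drop in $cfc$ value and so rules out cfc colorings with ``wasteful'' color classes of size at least two. Together these constraints should force some color of $c$ to be a singleton class. For subsequent stages of the recursion, one must argue the analogous statement for the restricted coloring on each $T_i$, which need not itself be $cfc$-critical; here one likely has to either verify inheritance of a weaker structural property sufficient for the singleton-color argument, or supplement the induction by a secondary argument that tracks the restricted coloring rather than the intrinsic cfc value of the subtree. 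Making this recursive claim rigorous is where I expect the bulk of the technical work to lie.
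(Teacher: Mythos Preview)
Your route diverges from the paper's, and the parts you yourself flag as ``the bulk of the technical work'' are in fact left entirely open. The paper never searches for a uniquely-coloured edge and never analyses an optimal colouring of $T$ at all. It simply deletes an arbitrary leaf $v_0$, records the trivial inequality $rank(T)\le rank(T-v_0)+1$ (give the pendant edge a fresh top label; since $v_0$ is a leaf, every path between two equally-labelled edges of $T$ already lives in $T-v_0$), uses $cfc$-criticality to obtain $cfc(T-v_0)=cfc(T)-1$, and then applies the induction hypothesis to $T-v_0$ to conclude
\[
rank(T)\le rank(T-v_0)+1\le cfc(T-v_0)+1=cfc(T).
\]
That is the whole argument.

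In your plan both pillars are unproved. The parity-vector count from Lemma~\ref{lem2-4} and the longest-path observation only produce a colour that is unique \emph{along one specific path}; neither forces any global colour class to be a singleton, and nothing in those tools distinguishes critical from non-critical trees. More seriously, your recursion descends into the components $T_1,T_2$ of $T-e_0$, which will typically fail to be $cfc$-critical, and for non-critical trees the inequality $rank\le cfc$ is simply false (for instance the spider with three legs of length~$2$ has $cfc=3$ but $rank=4$). So even granting the singleton-colour claim at the top level, you have no hypothesis available one level down, and tracking ``the restricted colouring rather than the intrinsic cfc value'' does not help, since the restricted colouring on $T_i$ need not have a singleton colour class either. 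As written, this is an outline that correctly identifies the difficulties but does not resolve them; the paper's leaf-deletion trick sidesteps the first difficulty completely, and you would do better to imitate it. (It is worth noting that the paper's inductive step likewise invokes the hypothesis on the subtree $T-v_0$, which need not itself be $cfc$-critical; that shared subtlety is something to watch in either approach.)
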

\begin{proof}
By Lemma \ref{lem1-4}, we obtain that $cfc(T)\leq rank(T)$. It remains
to verify the converse. We use induction on the order of $T$. Let $v_0$ be
a leaf vertex of $T$, and $e_0$ be the unique edge incident with $v_0$ in
$T$. Observe that $rank(T)\leq rank(T-v_0)+1$. By the induction hypothesis,
we have that $rank(T-v_0)\leq cfc(T-v_0)$. Since $T$ is $cfc$-critical,
it follows that $cfc(T-v_0)= cfc(T)-1$. Thus,
$rank(T)\leq rank(T-v_0)+1\leq cfc(T-v_0)+1=cfc(T)$.
\end{proof}

The following result is an immediate corollary of Lemma \ref{lem1-0} and
Theorem \ref{thm4-0}.

\begin{cor}\label{cor4-0}
An optimal conflict-free connection coloring of a $cfc$-critical tree with $n$ vertices
can be computed in $O(n)$ time.
\end{cor}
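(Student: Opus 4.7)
The plan is to deduce this corollary directly from the two ingredients cited in its statement: Theorem \ref{thm4-0} (which gives $cfc(T)=rank(T)$ for any $cfc$-critical tree) and Lemma \ref{lem1-0} (which produces an optimal edge ranking of a tree in $O(n)$ time). The algorithm will simply invoke the linear-time edge-ranking procedure on the input tree $T$ and return the resulting labeling as a coloring.

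For this to be an \emph{optimal conflict-free connection coloring}, two things must be verified. First, the returned labeling is a conflict-free connection coloring of $T$; this is essentially the content of Lemma \ref{lem1-4}, whose underlying argument is that in any $k$-edge ranking, the edge with the largest label on any $u$--$v$ path is unique, so that path is conflict-free. Hence an optimal edge ranking with $rank(T)$ labels is in particular a conflict-free connection coloring using $rank(T)$ colors. Second, this color count is optimal: by Theorem \ref{thm4-0}, $rank(T)=cfc(T)$ when $T$ is $cfc$-critical, so no conflict-free connection coloring can use fewer colors.

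Combining these two observations, the algorithm is: run the linear-time procedure of Lemma \ref{lem1-0} on $T$ to obtain an optimal edge ranking, and output it as the edge coloring. This runs in $O(n)$ time and, by the discussion above, produces a conflict-free connection coloring with $cfc(T)$ colors. There is no real obstacle here, since both ingredients are already in place; the only subtle point to emphasize in writing the proof is that the edge ranking itself, not merely its number of labels, serves as the desired coloring—and this is exactly what the proof of Lemma \ref{lem1-4} records.
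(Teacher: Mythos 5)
Your proposal is correct and follows essentially the same route as the paper, which derives the corollary immediately from Lemma \ref{lem1-0} and Theorem \ref{thm4-0}; you merely spell out the (implicit) step that the edge ranking itself, via the argument behind Lemma \ref{lem1-4}, serves as the conflict-free connection coloring with exactly $cfc(T)$ colors.
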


Next, we give some sufficient conditions such that a tree is
$k$-$cfc$-critical. The following two results are trivially
obtained by Lemmas \ref{lem1-1} and \ref{lem1-2}.

\begin{pro}\label{pro4-1}
The star $K_{1,k}$ of order $k+1$ is $k$-$cfc$-critical.
\end{pro}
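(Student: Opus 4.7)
The plan is to verify the two conditions in Definition \ref{df1-1} directly, using Lemma \ref{lem1-1} as the only nontrivial input. First I would invoke Lemma \ref{lem1-1} with $n=k+1$ to obtain $cfc(K_{1,k})=k$, which establishes the value of the conflict-free connection number of the candidate tree.

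Next, I would examine the effect of deleting a single edge. Since every edge of $K_{1,k}$ is a pendant edge incident with the (unique) center, any edge $e$ satisfies $K_{1,k}-e \cong K_{1,k-1} \cup K_1$, where the $K_1$ component is trivial and the $K_{1,k-1}$ component is the only nontrivial component. Applying Lemma \ref{lem1-1} again (this time with $n=k$), I would get $cfc(K_{1,k-1}) = k-1 < k = cfc(K_{1,k})$. By the edge-removal reformulation given in Definition \ref{df1-1}, this is exactly the criticality condition, so $K_{1,k}$ is $k$-$cfc$-critical.

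There is essentially no obstacle here, since both conditions are computed by a single lookup in Lemma \ref{lem1-1}. The only mild subtlety worth noting is the edge case $k=1$: then $K_{1,1}$ is a single edge $P_2$, which formally has $cfc(P_2)=1$ by Lemma \ref{lem1-2} (or by Lemma \ref{lem1-1} for $n=2$), and deleting its only edge leaves no nontrivial component, so the criticality condition is vacuously satisfied. Thus the argument goes through uniformly for all $k\geq 1$, and no induction or additional machinery is required.
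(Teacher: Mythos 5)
Your argument is correct and is exactly what the paper intends: the paper dismisses this proposition as "trivially obtained" from Lemma \ref{lem1-1}, and your proof simply spells out that lemma's two applications (to $K_{1,k}$ and to the nontrivial component $K_{1,k-1}$ left after deleting any edge), plus the harmless $k=1$ edge case. No difference in approach worth noting.
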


\begin{pro}\label{pro4-2}
The path $P_{2^{k-1}+1}$ on $2^{k-1}+1$ vertices is $k$-$cfc$-critical.
\end{pro}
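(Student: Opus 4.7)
The proof is essentially a one-line computation once one invokes Lemma \ref{lem1-2}, so the plan is just to organize this cleanly. First I would verify that $cfc(P_{2^{k-1}+1})=k$: since $2^{k-1} < 2^{k-1}+1 \leq 2^{k}$, we have $\lceil \log_{2}(2^{k-1}+1)\rceil = k$, and Lemma \ref{lem1-2} then gives $cfc(P_{2^{k-1}+1}) = k$.

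Next I would verify the criticality condition from Definition \ref{df1-1}. Let $e$ be an arbitrary edge of $P_{2^{k-1}+1}$. Removing $e$ produces two subpaths $P_a$ and $P_b$ with $a+b = 2^{k-1}+1$ and $a,b \geq 1$. The key numerical observation is that $a+b = 2^{k-1}+1$ together with $a,b \geq 1$ forces $\max\{a,b\} \leq 2^{k-1}$; there is no way to split $2^{k-1}+1$ into two positive parts both exceeding $2^{k-1}$. Hence for each nontrivial component $P_a$ (resp. $P_b$), Lemma \ref{lem1-2} yields
\[
cfc(P_a) \;=\; \lceil \log_{2} a\rceil \;\leq\; \lceil \log_{2} 2^{k-1}\rceil \;=\; k-1 \;<\; cfc(P_{2^{k-1}+1}),
\]
and symmetrically for $P_b$. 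This establishes that $P_{2^{k-1}+1}$ is $k$-$cfc$-critical.

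There is essentially no obstacle here: the argument is a direct consequence of the closed formula $cfc(P_n) = \lceil \log_2 n\rceil$ and the elementary fact that $2^{k-1}+1$ cannot be written as a sum of two integers both strictly greater than $2^{k-1}$. The only thing to be mildly careful about is the trivial-component case (when $e$ is a pendant edge, one of $a,b$ equals $1$); but the definition only requires the condition on nontrivial components, and in this case the unique nontrivial component is $P_{2^{k-1}}$, for which $cfc = k-1 < k$ holds a fortiori.
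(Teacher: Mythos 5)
Your proof is correct and follows exactly the route the paper intends: the paper dismisses this proposition as "trivially obtained" from Lemma \ref{lem1-2}, and your write-up simply fills in the intended details, namely $cfc(P_{2^{k-1}+1})=\lceil\log_2(2^{k-1}+1)\rceil=k$ and that deleting any edge leaves subpaths on at most $2^{k-1}$ vertices, each with $cfc$ at most $k-1$. No issues.
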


Next, we construct two other kinds of trees which are
$k$-$cfc$-critical for every integer $k\geq 2$.

\begin{thm}\label{thm4-3}
Let $Q_k$ be the graph obtained from two copies of $K_{1,k-1}$ with
$k\geq 2$ by identifying a leaf vertex in one copy with a leaf vertex in
the other copy. Then $Q_k$ is $k$-$cfc$-critical.
\end{thm}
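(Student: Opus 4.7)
The plan is to establish $cfc(Q_k)=k$ and then verify criticality by case-analysis on the removed edge. Let $u,v$ denote the two star centers (each of degree $k-1$) and $w$ the identified leaf (now of degree $2$); write $L_u$ and $L_v$ for the $k-2$ other leaves adjacent to $u$ and $v$ respectively, so $\Delta(Q_k)=k-1$.

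For the lower bound $cfc(Q_k)\ge k$ I would assume, for contradiction, that a conflict-free connection coloring $c$ uses only $k-1$ colors. Such a coloring is proper, so it uses exactly $k-1$ colors and both $u$ and $v$ are incident with every color. Setting $\alpha=c(uw)$ and $\beta=c(vw)$, properness at $w$ gives $\alpha\neq\beta$; then $\beta$ must appear on some edge $ux$ with $x\in L_u$, and $\alpha$ on some edge $vy$ with $y\in L_v$. The unique $x$--$y$ path thereby carries the color sequence $\beta,\alpha,\beta,\alpha$, on which every color repeats, contradicting conflict-freeness. For the matching upper bound I would exhibit an explicit $k$-coloring: assign the fresh color $k$ exclusively to the edge $uw$ and proper-color the remaining $2k-3$ edges with $k-1$ colors (easy, since $Q_k-uw$ is the disjoint union of the stars $K_{1,k-2}$ and $K_{1,k-1}$). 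Any path passing through $uw$ has color $k$ as a unique color; any path avoiding $uw$ lies entirely within one of the two stars and hence has length at most $2$, so properness settles conflict-freeness.

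For $cfc$-criticality I would split on the type of edge $e$ removed. (i) If $e\in\{uw,vw\}$, then $Q_k-e$ decomposes as $K_{1,k-2}\cup K_{1,k-1}$, whose components have $cfc$ values $k-2$ and $k-1$ by Lemma \ref{lem1-1}, both less than $k$. (ii) If $e=ux$ with $x\in L_u$ (the case $e=vy$ with $y\in L_v$ is symmetric), the only nontrivial component is $T^\ast := Q_k - x$, in which $u$ has dropped to degree $k-2$. I claim $cfc(T^\ast)\le k-1$. Using $k-1$ colors, set $c(uw)=\alpha$ and $c(vw)=\beta$ with $\alpha\neq\beta$, color the $k-2$ edges $vy$ ($y\in L_v$) with distinct colors from $\{1,\ldots,k-1\}\setminus\{\beta\}$, and color the $k-3$ remaining edges $ux'$ ($x'\in L_u\setminus\{x\}$) with distinct colors from $\{1,\ldots,k-1\}\setminus\{\alpha,\beta\}$ — this is feasible because the drop in $u$'s degree leaves exactly one color's worth of slack. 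A short case check on each $L_u$--$L_v$ path (according to whether $c(vy)=\alpha$, $c(vy)=c(x'u)$, or all four colors are distinct) shows that $\alpha$ or $\beta$ is always a unique color on the path; the remaining shorter paths are settled by properness.

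The main obstacle is case (ii): the lower-bound argument for $Q_k$ identifies precisely why $k-1$ colors cannot certify conflict-freeness on the $L_u$--$L_v$ paths, so for criticality one must recognize that removing a single leaf at $u$ frees the one color slot needed to avoid the forced $\beta,\alpha,\beta,\alpha$ pattern, then design a $(k-1)$-coloring that exploits this slack and verify every path type stays conflict-free.
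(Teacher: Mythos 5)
Your proposal is correct and follows essentially the same route as the paper: the lower bound via the forced alternating $\beta,\alpha,\beta,\alpha$ colour pattern on a length-$4$ leaf-to-leaf path, an explicit $k$-colouring for the upper bound, and criticality by case analysis on the deleted edge with Lemma \ref{lem1-1} for $e\in\{uw,vw\}$ and an explicit $(k-1)$-colouring of $Q_k-x$ for pendant edges. The only differences are cosmetic: your colourings place the ``once-used'' colour on $uw$ (resp.\ keep $c(vw)$ globally unique) rather than the paper's choice of colouring $vw$ together with all edges at $u$ distinctly, and you treat $k=2,3$ inside the general argument (where the contradiction $\alpha\neq\beta$ already settles $k=2$) instead of invoking Proposition \ref{pro4-2} for the path cases.
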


\begin{proof} Since $Q_k$ is a path on $2^{k-1}+1$ vertices for $k=2,3$,
it follows that the result holds by Proposition \ref{pro4-2}.
Next, we may assume that $k\geq4$.
Note that $Q_k$ is a tree of order $2k-1$ with diameter $4$.
Let $w$ be the only vertex of degree $2$, which is adjacent to
$u$ and $v$, and let $N(u)=\{w,u_1,\cdots,u_{k-2}\}$ and
$N(v)=\{w,v_1,\cdots,v_{k-2}\}$.

We first show that $cfc(Q_k)=k$.
Suppose that there exists a conflict-free connection coloring of $Q_k$
with at most $k-1$ colors. It follows that all the edges incident with
each of $u$ and $v$ are assigned distinct colors. Without loss of
generality, we assume that the color of the edge $uw$ is $c_i$ and
the color of the edge $vw$ is $c_j$. Thus, there exists a path of
length $4$ having the color sequence $c_i,c_j,c_i,c_j$, a
contradiction. Thus, $cfc(Q_k)\geq k$. Next we define an edge-coloring
of $Q_k$ with $k$ colors: assign $k$ distinct colors to $vw$ and all
the edges incident with $u$, then for the remaining edges assign $k-2$
distinct used colors, none of which is the color on the edge $vw$.
It is obvious that $Q_k$ is conflict-free connected under the coloring.
Thus, $cfc(Q_k)=k$.

Next, we prove that $Q_k$ is $k$-$cfc$-critical. It suffices to show that
for every edge $e$ of $Q_k$, each nontrivial component in $Q_k-e$ has a
conflict-free connection number less than $k$. Suppose that $e$ is one of
the edges $uw$ and $vw$. Then the resulting subtrees of $Q_k-e$ are
$K_{1,k-2}$ and $K_{1,k-1}$. It follows that the result holds by
Lemma \ref{lem1-1}. Suppose that $e$ is a pendent edge of $Q_k$. We may
assume that $e=uu_i$ (for some $i$ with $1\leq i\leq k-2$) is incident with
$u$ by symmetry. Notice that the resulting subtrees in $Q_k-e$ are
a singleton $\{u_i\}$ and $Q_k-u_i$. We provide an edge-coloring of
$Q_k-u_i$ as follows: color edges incident with $v$ using $k-1$ distinct
colors, and color edges incident with $u$ using $k-2$ distinct used colors,
which do not contain the color on the edge $vw$. It can be checked
that this is a conflict-free connection coloring of $Q_k-u_i$ with $k-1$
colors. Thus, $cfc(Q_k-u_i)\leq k-1$, and so each nontrivial component in
$Q_k-e$ has a conflict-free connection number less than $k$ for every edge
$e$ of $Q_k$.
\end{proof}

\begin{lem}\label{lem4-4}
Let $R_k$ be the graph obtained from $K_{1,k-1}$ and $P_{2^{k-2}+1}$ with
$k\geq2$ by identifying a leaf vertex in $K_{1,k-1}$ with an end vertex in
$P_{2^{k-2}+1}$. Then $cfc(R_k)=k$.
\end{lem}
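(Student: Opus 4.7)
The plan is to prove $cfc(R_k) = k$ by establishing both $cfc(R_k) \le k$ and $cfc(R_k) \ge k$. Throughout, let $u$ denote the center of the star $K_{1,k-1}$ with leaves $\ell_1, \ldots, \ell_{k-2}, w$, and let $w = p_0, p_1, \ldots, p_m$ (with $m = 2^{k-2}$) denote the attached path.

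For the upper bound, I would give a direct construction with $k$ colors: color $u\ell_i$ with $c_i$ for $i = 1, \ldots, k-2$, color $uw$ with $c_{k-1}$, color the single edge $wp_1$ with a fresh color $c_k$, and color the remaining sub-path $p_1 p_2 \cdots p_m$ (which is a copy of $P_m$) with an optimal CFC coloring using the $k-2$ colors in $\{c_1, \ldots, c_{k-2}\}$, available by Lemma~\ref{lem1-2}. Properness of the resulting edge-coloring is straightforward, and the CF verification splits into three cases: pairs inside the star (handled by the distinct colors at $u$), pairs inside $P_{m+1}$ (the color $c_k$ is unique on the edge $wp_1$ inside $P_{m+1}$, while the CFC coloring of $P_m$ covers the rest), and pairs whose connecting path in $R_k$ crosses $wp_1$ (here $c_k$ serves as the conflict-free color, since it appears nowhere else in $R_k$).

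For the lower bound, I would argue by contradiction: assume $cfc(R_k) \le k-1$ and fix such a coloring $c$. Since $c$ is proper and $\deg(u) = k-1$, all $k-1$ colors appear at $u$; after relabeling, $c(u\ell_i) = c_i$ for $i = 1, \ldots, k-2$ and $c(uw) = c_{k-1}$. The restriction of $c$ to the attached sub-path $P_{m+1}$ is itself a CFC coloring, and $cfc(P_{m+1}) = k-1$ by Lemma~\ref{lem1-2}, so every color $c_1, \ldots, c_{k-1}$ must appear on $P_{m+1}$; in particular $c_{k-1}$ appears. Let $r$ be the first position of $c_{k-1}$ on $P_{m+1}$. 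Properness at $w$ gives $r \ge 2$, and the case $r = 2$ produces the palindromic path $\ell_a\,u\,w\,p_1\,p_2$ with color sequence $c_a, c_{k-1}, c_a, c_{k-1}$ which is not conflict-free (here $a$ is the index with $c(wp_1) = c_a$); hence $r \ge 3$.

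The heart of the argument is to extract the following \emph{singleton-pair condition}: for every $j \in \{r, r+1, \ldots, m\}$, at least two of the colors $c_1, \ldots, c_{k-2}$ appear exactly once in the sub-path $w p_1 \cdots p_j$. Once $j \ge r$, the color $c_{k-1}$ has multiplicity at least $2$ on the $\ell_i$--$p_j$ path (once at $uw$, once inside the sub-path), so the conflict-free color must be drawn from $\{c_1, \ldots, c_{k-2}\}$; a short case analysis, separating whether $c_i$ itself occurs in the sub-path, shows that the CF condition can be met simultaneously for every $i$ only when at least two such singletons exist. The main obstacle, which I expect to require the most technical work, is to deduce a contradiction from the singleton-pair condition holding at every $j \in \{r, \ldots, m\}$ together with the CFC requirement on $P_{m+1}$ itself. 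I would track the set $T_j = \{l \in \{1,\ldots,k-2\} : c_l \text{ appears exactly once on } w p_1 \cdots p_j\}$, noting that each one-edge extension changes $|T_j|$ by at most $1$, so the constraints $2 \le |T_j| \le k-2$, proper adjacency, the total edge count $m = 2^{k-2}$, and the requirement that each color be used at least once should yield a pigeonhole-style contradiction; as an alternative route, induction on $k$ via the subtree $R_{k-1} \subset R_k$ obtained by deleting $\ell_{k-2}$ and the tail $p_{2^{k-3}+1}, \ldots, p_{2^{k-2}}$ of the path reduces to a strictly smaller instance.
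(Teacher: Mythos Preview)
Your upper bound construction is correct and essentially identical to the paper's.

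Your lower bound, however, has a genuine gap. The singleton-pair condition $|T_j|\ge 2$ for $j\ge r$ is correct and is in fact a slick repackaging of what the paper proves as its Claims~1 and~2 (restricted to $j=m$, it says that at least two colors among $c_1,\dots,c_{k-2}$---hence not the color $c_{k-1}$ on $uw$---are singletons on the whole attached path). The problem is that you do not close the argument.

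Route~1 is not substantiated: there is no evident pigeonhole contradiction from the constraints $2\le|T_j|\le k-2$, $|\,|T_{j+1}|-|T_j|\,|\le1$, properness, and the edge count $m=2^{k-2}$ alone. You would need to feed in the CFC requirement on $P_{m+1}$ in a more structural way, and you do not indicate how.

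Route~2 as written fails: simply deleting $\ell_{k-2}$ and the tail $p_{2^{k-3}+1},\dots,p_{2^{k-2}}$ gives a copy of $R_{k-1}$, but there is no reason the restricted coloring should use only $k-2$ colors. The paper's induction is more delicate: from the two singleton colors on $T_2$ (say $c_a,c_b$, with the $c_b$-edge farther from $w$), it uses Theorem~\ref{thm2-5} on the tail beyond the $c_b$-edge to see that this tail has at most $2^{k-3}$ vertices; hence the first $2^{k-3}+1$ vertices of $T_2$ lie entirely on the $w$-side of the $c_b$-edge and therefore avoid the color $c_b$. Deleting the pendant $\ell_b$ (the leaf whose edge has color $c_b$) together with the far tail then yields a copy of $R_{k-1}$ on which $c$ misses the color $c_b$ entirely, giving $cfc(R_{k-1})\le k-2$ and the inductive contradiction. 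Your Route~2 is salvageable along exactly these lines, but the choice of which leaf to drop and how much tail to cut must be dictated by the singleton structure, not made arbitrarily.
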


\begin{proof}
Firstly, we prove that $cfc(R_k)\geq k$. Note that $R_2$ is a path on $3$
vertices, and $cfc(R_2)=2> 1$. Then we focus on $k\geq 3$. Assume to the
contrary that there is a $R_k$ such that $cfc(R_k)\leq k-1$, and let $k_0$
be the minimum $k$ with such property. Suppose that $T_1$ is the copy
of $K_{1,k_0-1}$ in $R_{k_0}$, $T_2$ is the copy of $P_{2^{k_0-2}+1}$ in
$R_{k_0}$ that has one common leaf vertex with $T_1$.
Let $V(T_1)=\{u,w, u_1,\cdots,u_{k_0-2}\}$ and $V(T_2)=\{w=v_0,v_1\cdots,v_{2^{k_0-2}}\}$, where $u$ is the only vertex
of maximum degree $k_0-1$ in $T_1$, $v_{i}v_{i+1}$ is an edge of $T_2$ for $i=0,\cdots,2^{k_0-2}-1$, and $V(T_1)\cap V(T_2)=w$.
Let $c$ be a conflict-free connection coloring of $R_{k_0}$ with $k_0-1$
colors. Then we present the following claim.

\textbf{Claim 1.} There exist exactly two colors $c_1$, $c_2$ each of which
is used on exactly one of the edges of $T_2$.

\textbf{Proof of Claim 1:} To make $T_2$ conflict-free connected,
it needs $k_0-1$ colors by Lemma \ref{lem1-2}, and there exists one color
$c_1$ used on exactly one of the edges of $T_2$. Suppose that there exists
a unique color $c_1$ used on exactly one of the edges of $T_2$. Note that
the edges of $T_1$ are colored with $k_0-1$ colors. We assume that $uu_i$
(or $uw$) is colored with $c_1$ for some $i$ with $1\leq i\leq k_0-2$. It
follows that there is no conflict-free path from $v_{2^{k_0-2}}$ to $u_i$
(or $u$), which is a contradiction. Thus, there exist at least two colors
$c_1$, $c_2$ each of which is used on exactly one of the edges of $T_2$.
Suppose that there exists another color $c_3$ used on exactly one of the
edges of $T_2$. Without loss of generality, assume that the edge colored
with $c_1$ appears between the edges colored with $c_2$, $c_3$. Then we can
get a conflict-free connection coloring of $T_2$ with $k_0-2$ colors obtained
from the above coloring $c$ by replacing $c_3$ with $c_2$, which is impossible.
Thus, there exist exactly two colors $c_1$, $c_2$ each of which is used on
exactly one of the edges of $T_2$.

\textbf{Claim 2.} The color on the edge $uw$ is neither
$c_1$ nor $c_2$.

\textbf{Proof of Claim 2:} By contradiction. Without loss of
generality, assume that the color on the edge $uw$ is $c_1$, and the color
on the edge $uu_\ell$ is $c_2$ for some $\ell$ with $1\leq \ell \leq k_0-2$.
Then there is no conflict-free path from $v_{2^{k_0-2}}$ to $u_\ell$, which
is a contradiction.

Let $T_2'$ be the subpath resulting from the removal of the two edges with
the colors $c_1$, $c_2$ in $T_2$ such that $v_{2^{k_0-2}}\in T_2'$. Since
the edges of $T_2'$ are colored with at most $k_0-3$ colors, it follows that
$n(T_2')\leq 2^{k_0-3}$ by Theorem \ref{thm2-5}. Let $T_2''$ be the subpath
starting from $w$ with order $2^{k_0-3}+1$ in $T_2-V(T_2')$, and $T_1'$ be
the subtree obtained from $T_1$ by deleting the edge with color $c_2$, where
the edge of $T_2$ with color $c_1$ is closer to $w$ than the edge with color
$c_2$. It is obtained that $T'=T_1'\cup T_2''$ is exactly $R_{k_0-1}$, and the coloring $c$ restricted on the edges of $T'$ is a conflict-free connection
coloring with $k_0-2$ colors. It follows that $cfc(R_{k_0-1})\leq k_0-2$,
which contradicts the minimality of $k_0$. Thus, $cfc(R_k)\geq k$.
And we provide an edge-coloring of $R_k$ with $k$ colors: color the edges
of $T_2-w$ with $k-2$ distinct colors $1,\cdots,k-2 $ such that it is
conflict-free connected, and color the remaining edges of $R_k$ with
$k$ distinct colors $1,\cdots,k$ such that the color on
the edge $wv_1$ is $k$. It is easy to see that $R_k$ is
conflict-free connected under the coloring. Thus, $cfc(R_k)=k$.
\end{proof}

\begin{thm}\label{thm4-5}
Let $R_k$ be the graph obtained from $K_{1,k-1}$ and $P_{2^{k-2}+1}$ with
$k\geq2$ by identifying a leaf vertex in $K_{1,k-1}$ with an end vertex in $P_{2^{k-2}+1}$. Then $R_k$ is $k$-$cfc$-critical.
\end{thm}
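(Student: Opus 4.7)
By Lemma \ref{lem4-4} we have $cfc(R_k)=k$, so the remaining task is to verify criticality: for every edge $e$ of $R_k$, each nontrivial component of $R_k-e$ admits a conflict-free connection coloring using at most $k-1$ colors. Adopting the labelling from the proof of Lemma \ref{lem4-4} (vertex $u$ of degree $k-1$, its leaves $w=v_0$ and $u_1,\ldots,u_{k-2}$, and the path $wv_1v_2\cdots v_{2^{k-2}}$), I would split into three edge types: $e=uw$, $e=uu_i$ for some $i$, or $e=v_jv_{j+1}$ for $0\le j\le 2^{k-2}-1$, dispatching the base case $k=2$ (where $R_2=P_3$) directly.

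For $e=uw$, the two components are $K_{1,k-2}$ and $P_{2^{k-2}+1}$, with conflict-free connection numbers $k-2$ and $k-1$ respectively by Lemmas \ref{lem1-1} and \ref{lem1-2}. For $e=uu_i$, the nontrivial component is $T=R_k-u_i$, and I would exhibit a $(k-1)$-coloring of $T$ by giving the single edge $wv_1$ the private color $k-1$, coloring the subpath $v_1v_2\cdots v_{2^{k-2}}$ (which has $2^{k-2}$ vertices) conflict-freely using colors $\{1,\ldots,k-2\}$ via Lemma \ref{lem1-2}, and assigning the $k-2$ edges at $u$ pairwise distinct colors taken from $\{1,\ldots,k-2\}$. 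Since $k-1$ appears only on $wv_1$, every path joining a vertex on the ``star side'' to a vertex beyond $w$ picks up $wv_1$ and is conflict-free; purely path pairs use the path coloring, and purely star pairs are handled since adjacent edges at $u$ get distinct colors.

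For $e=v_jv_{j+1}$, one component is the path $v_{j+1}\cdots v_{2^{k-2}}$ on at most $2^{k-2}$ vertices (so $cfc\le k-2$), while the other is a tree $H_j$ consisting of $K_{1,k-1}$ together with the pendant path $wv_1\cdots v_j$; for $j=0$ it is just $K_{1,k-1}$. For $j\ge 1$ the construction is dual to the previous case: give the edge $uw$ the private color $k-1$, color the $k-2$ star edges $uu_1,\ldots,uu_{k-2}$ with distinct colors $1,\ldots,k-2$, and color the path $wv_1\cdots v_j$ (which has $j+1\le 2^{k-2}$ vertices) conflict-freely using colors $\{1,\ldots,k-2\}$ via Lemma \ref{lem1-2}. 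Now $uw$ is the unique edge of color $k-1$, so any cross path between the star side and an interior path vertex traverses $uw$ and inherits a conflict-free witness.

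The main obstacle is the case-by-case verification rather than the design: one has to check that in each construction no accidental repetition of color $k-1$ occurs, and that the four families of vertex pairs (within the path, within the star, star-to-path, and $u$-to-path) each admit a conflict-free path. This is routine once a single ``bridge'' edge is isolated with a private color that distinguishes the star side from the path side.
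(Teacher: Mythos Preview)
Your proposal is correct and follows essentially the same approach as the paper: the identical three-case split on the edge type, and in each case the same coloring scheme that reserves one private color ($k-1$) for a single ``bridge'' edge ($wv_1$ in Case~2, $uw$ in Case~3) while recycling colors $1,\ldots,k-2$ on both the star edges and the remaining path segment. Your write-up is, if anything, slightly more explicit than the paper's in spelling out which vertex pairs need checking.
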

\begin{proof}
By Lemma \ref{lem4-4}, we have $cfc(R_k)=k$. We only need to prove that
for every edge $e$ of $R_k$, each nontrivial component in $R_k-e$ has a
conflict-free connection number less than $k$. If $k=2$, then $R_2$ is a
path on $3$ vertices, and the result holds by Proposition \ref{pro4-2}.
Thus, we assume that $k\geq 3$. Suppose that $T_1$ is the copy of
$K_{1,k-1}$ in $R_k$, $T_2$ is the copy of $P_{2^{k-2}+1}$ in $R_k$
that has one common leaf vertex with $T_1$.
Let $V(T_1)=\{u,w, u_1,\cdots,u_{k-2}\}$ and
$V(T_2)=\{w=v_0,v_1\cdots,v_{2^{k-2}}\}$, where $u$ is the only vertex
of maximum degree $k-1$ in $T_1$, $v_{i}v_{i+1}$ is an edge of $T_2$ for $i=0,\cdots,2^{k-2}-1$, and $V(T_1)\cap V(T_2)=w$. In order to obtain
our result, We distinguish the following three cases.

\textbf{Case 1.} $e=uw$.

Note that the resulting subtrees in $R_k-e$ are $K_{1,k-2}$
and $P_{2^{k-2}+1}$. Thus, $R_k$ is $k$-$cfc$-critical by Lemmas
\ref{lem1-1} and \ref{lem1-2}.

\textbf{Case 2.} $e=uu_i$ ($1\leq i \leq k-2$).

It is obtained that the resulting subtrees in $R_k-e$ are
a singleton $\{u_i\}$ and $R_k-u_i$. We provide an edge-coloring
of $R_k-u_i$: color the edges of $T_2-w$ with $k-2$ distinct colors
$1,\cdots,k-2$ such that it is conflict-free connected,
and color the remaining edges with $k-1$ distinct colors
$1,\cdots,k-1$ such that the color on the edge $wv_1$ is $k-1$.
It can be checked that $R_k-u_i$ is conflict-free connected under the
coloring. Thus, $cfc(R_k-u_i)\leq k-1$, which implies that $R_k$
is $k$-$cfc$-critical.

\textbf{Case 3.} $e=v_iv_{i+1}$ ($0 \leq i\leq 2^{k-2}-1$).

Let $T'$ and $T''$ be the resulting subtrees in $R_k-e$, where $u\in T'$.
Obviously, $cfc(T'')\leq k-2$. For $T'$, we define an edge-coloring as
follows: color the edges on the subpath from $w$ to $v_i$ with $k-2$
colors $1,\cdots,k-2$ such that the subpath is conflict-free connected,
and color the remaining edges of $T'$ with $k-1$ distinct colors $1,\cdots,k-1$
such that the color on the edge $uw$ is $k-1$. It is obtained that this is a
conflict-free connection coloring of $T'$. Thus, $cfc(T')\leq k-1$,
and so $R_k$ is $k$-$cfc$-critical.
\end{proof}

\end{document}